\definecolor{alert}{rgb}{0.8,0,0}
\newcommand{\R}{\mathbb{R}}
\newcommand{\C}{\mathbb{C}}
\newcommand{\s}{\mathbb{S}}
\newcommand{\h}{\mathbb{H}}
\newcommand{\ads}{\mathbf{H}^3_1}
\newcommand{\pIm}{\mathrm{Im}}
\newcommand{\df}{\mathrm{d}}
\newcommand{\bz}{\bar{z}}
\renewcommand{\Re}{\mathrm{Re}}
\renewcommand{\Im}{\mathrm{Im}}
\newcommand{\prodesc}[2]{\left\langle #1, #2 \right \rangle}
\newcommand{\vprodesc}[2]{\langle\!\langle #1, #2 \rangle\!\rangle}
\newcommand{\abs}[1]{\left\lvert #1 \right\rvert}
\DeclareMathOperator{\cotanh}{cotanh}
\DeclareMathOperator{\arccotanh}{arccotanh}
\DeclareMathOperator{\sn}{sn} 
\DeclareMathOperator{\arcsn}{arcsn} 
\DeclareMathOperator{\cn}{cn}	
\DeclareMathOperator{\arccn}{arccn}	
\DeclareMathOperator{\tn}{tn}	
\DeclareMathOperator{\arctn}{arctn}	
\DeclareMathOperator{\dn}{dn}	
\DeclareMathOperator{\am}{am}	
\newtheorem{theorem}{Theorem}
\newtheorem*{theorem*}{Theorem}
\newtheorem{proposition}{Proposition}
\newtheorem{corollary}{Corollary}
\newtheorem{lemma}{Lemma}
\theoremstyle{definition}
\theoremstyle{remark}
  \newtheorem{remark}{Remark}
\numberwithin{equation}{section}
\title[Geometrical correspondence between minimal surfaces in $\ads$ and $\h^2\times \R$]{A geometrical correspondence between maximal surfaces in anti-De Sitter space-time and minimal surfaces in $\h^2\times \R$}
\author{Francisco Torralbo}
\address{Departement Wiskunde. KU Leuven. Celestijnenlaan 200B, B-3001 Leuven, Belgium}
\email{francisco.torralbo@wis.kuleuven.be}
\thanks{Research partially supported by a MCyT-Feder research project MTM2011-22547, Junta Andalucía Grants P09-FQM-5088 and P09-FQM-4496 and Belgian Interuniversity Attraction Pole P07/18 (Dygest)}
\subjclass[2000]{Primary 53C42; Secondary 53C40}
\keywords{Surfaces, minimal, complex surfaces}
\begin{document}

\begin{abstract}
A geometrical correspondence between maximal surfaces in anti-De Sitter space-time and minimal surfaces in the Riemannian product of the hyperbolic plane and the real line is established. New examples of maximal surfaces in anti-De Sitter space-time are obtained in order to illustrate this correspondence.
\end{abstract}

\maketitle

\section{Introduction}\label{sec:introduction}

The study of minimal surfaces in product spaces $M^2\times \R$ was initiated by Rosenberg and Meeks~\cite{Rosenberg2002,MR2005} and has been very active since then. Among that spaces, there are three homogeneous Riemannian manifolds: $\R^3$, where the classical theory of minimal surfaces has been developed, and $\s^2\times \R$ and $\h^2\times \R$, where many authors have been actively working. Giving a complete list of references in the subject is far from being possible so we will only mention a few of them: Nelli and Rosenberg~\cite{NR2002} proved a Jenkins-Serrin-type theorem in $\h^2\times \R$, Hauswirth~\cite{Hauswirth2006} constructed minimal examples or Riemann type, Sá Earp and Tobiana~\cite{ST2004} investigated the screw motion invariant surfaces in $\h^2\times \R$, Daniel~\cite{Daniel2009} and Hauwirth, Sá Earp and Tobiana~\cite{HST2008} showed, independently, the existence of an associated family of minimal immersions for simply connected minimal surfaces in $\s^2\times \R$ and $\h^2\times \R$, Urbano and the author~\cite{TU2013} tackled a general study of minimal surfaces in $\s^2\times \s^2$ with applications to $\s^2\times \R$, and very recently Manzano, Plehnert and the author~\cite{MPT2013} constructed orientable and non-orientable even Euler characteristic embedded minimal surfaces in the quotient $\s^2\times \s^1$ and Martín, Mazzeo and Rogríguez~\cite{MMR2014} constructed the first examples of complete, properly embedded minimal surfaces in $\h^2\times \R$ with finite total curvature and positive genus.

In this paper we are going to show a geometric relation between maximal surfaces in anti-De Sitter space-time $\ads$ and minimal immersions in $\h^2\times \R$. It is well-known that the Gauss map of a spacelike maximal immersion in $\ads$ is always a minimal Lagrangian immersion in $\h^2\times \h^2$ (see~\cite{Torralbo2007} and also~\cite{CU2007} where an analogous case for Lagrangian minimal immersions in $\s^2\times \s^2$ is studied). We are going to get new minimal immersions in $\h^2\times\h^2$ by pairing different components of the Gauss map of two suitable maximal immersions in anti-De Sitter space-time (see Theorem~\ref{thm:Gauss-map-pair}). This construction is in the same spirit as in~\cite{TU2013}. From that, and under an appropriate choice of one element in the pair, we will establish a conformal correspondence between maximal immersions in anti-De Sitter space-time and minimal immersions in $\h^2\times\R$ (see Corollary~\ref{cor:Gauss-map-H2xR}). We will also show that this result admits a local converse, i.e.\ that roughly speaking every minimal surface in $\h^2\times \R$ is locally the Gauss map of a maximal immersion in anti-De Sitter space (see Theorem~\ref{thm:local-correspondence}).

Finally, we will illustrate this geometric correspondence by showing new examples of maximal surfaces in anti-De Sitter space in Proposition~\ref{prop:examples-AdS} and computing their Gauss map (in the sense of Corollary~\ref{cor:Gauss-map-H2xR}). This will provide us with two $1$-parameter families of minimal examples in $\h^2\times\R$. We want to point out that, although the constructed maximal immersions in $\ads$ are non-complete (see Proposition~\ref{prop:examples-AdS}), their corresponding minimal immersions in $\h^2\times \R$ induce complete metrics in the surface. Moreover, they are invariant by screw motions and they were first described in~\cite{ST2004}.

The structure of the paper is the following: Section~\ref{sec:preliminaries} introduces both anti-De Sitter space-time $\ads$ and the Riemannian products $\h^2\times\h^2$ and $\h^2\times \R$, where $\h^2$ stands for the hyperbolic plane. In Section~\ref{sec:maximal-AdS}, we will briefly present some basic facts about maximal surfaces in anti-De Sitter space as well as some examples. Section~\ref{sec:Gauss-map-of-a-pair} contains the main theorems that are illustrated in Section~\ref{sec:examples}. Finally, Section~\ref{sec:appendix} contains an analysis of the solutions to the sinh-Gordon equation that only depend on one variable.

\section{Preliminaries}\label{sec:preliminaries}

\subsection{The hyperbolic plane and the product manifold $\h^2 \times \h^2$}\label{subsec:hyperbolic-plane}

Let $\h^2$ be the hyperbolic plane and $\prodesc{\,}{\,}$ its metric. Although for all computations we will use the hyperboloid model of $\h^2$, i.e.\ $\h^2 = \{p \in \R^3:\, \prodesc{p}{p} = -1,\, p_1 > 0\}$, where $\prodesc{x}{y} = -x_1y_1 + x_2y_2 + x_3y_3$, the Poincaré disc model is also considered in Figures~\ref{fig:positive-energy} and~\ref{fig:negative-energy}. 

We endow $\h^2 \times \h^2$ with the product metric, also denoted by $\prodesc{\,}{\,}$. So $\h^2 \times \h^2$ is an Einstein manifold with constant scalar curvature $-4$.

We will consider $\h^2 \times \R$ as the totally geodesic submanifold of $\h^2 \times \h^2$ given by the image of the map $i:\h^2 \times \R \rightarrow \h^2 \times \h^2$ defined by $i(p,t) = [p, (\sinh(t), 0, \cosh(t))]$.

\subsection{The anti-De Sitter $3$-space}\label{subsec:anti-De-Sitter}
The anti-De Sitter $3$-space, that it is usually denoted by $\ads$, is a Lorentz manifold of dimension $3$ and constant curvature $-1$. It is defined as a quadric in a vector space. More precisely, let $\R^4_2$ be the euclidean $4$-space endow with the metric $\vprodesc{u}{v} = u_1v_1 + u_2v_2 - u_3v_3 - u_4v_4$. Then $\ads = \{p \in \R^4_2:\, \vprodesc{p}{p} = -1\}$. Moreover, the map $\pi: \ads \subset \R^4_2 \equiv \C^2_1 \rightarrow \h^2(-2)$, where $\h^2(c)$ stands for the hyperbolic plane of constant curvature $c < 0$, given by
\[
\pi(z, w) = \left(z\bar{w}, \frac{1}{2}(\abs{z}^2 + \abs{w}^2) \right),
\]
is a semi-Riemannian submersion with totally geodesic fibers generated by the unit temporal vector field $\xi_{(z,w)} = (iz, iw)$. The fiber of a point $(z_0,w_0) \in \ads$ is the circle $(z_0e^{it}, w_0e^{it})$. 

\subsection{The Gauss map}\label{subsec:definition-gauss-map}
Let $\phi: \Sigma \rightarrow \R^4_2$ be a spacelike immersion of an oriented surface $\Sigma$. Its Gauss map assigns to each point of the surface its oriented tangent plane. In this particular case, the image of the Gauss map is contained in $G^+_*(2,4)$, the Grassmann manifold of oriented spacelike planes of $\R^4_2$. It is well-known that $G^+_*(2,4)$ is diffeomorphic to $\h^2 \times \h^2$ (see, for example, \cite[Section 1]{Palmer1991}). To understand the construction in Section~\ref{sec:Gauss-map-of-a-pair} a suitable identification between $\h^2 \times \h^2$ and $G^+_*(2,4)$ is needed.

Let $\Lambda^2\R^4_2 = \mathrm{span}\{v \wedge w:\, v, w\in \R^4_2\} \equiv \R^6$ be the linear space generated by the $2$-vectors in $\R^4$ endowed with the index $2$ metric
\[
g(v \wedge w, v' \wedge, w') = \vprodesc{v}{w'}\vprodesc{w}{v'} - \vprodesc{v}{v'}\vprodesc{w}{w'}.
\]
The star operator $\star: \Lambda^2 \R^4_2\rightarrow \Lambda^2\R^4_2$ defined by $\alpha\wedge (\star \beta) = g(\alpha, \beta)\Omega$ for all $\alpha, \beta \in \Lambda^2 \R^4_2$, where $\Omega$ is the orientation form of $\R^4$, is an linear automorphism of $\Lambda^2 \R^4_2$ with eigenvalues $\pm 1$. Consider $\Lambda^2_\pm \R^4_2$ the eigenspaces of $\star$ associated to the eigenvalues $\pm 1$. Observe that we can decompose $\Lambda^2 \R^4_2 = \Lambda^2_+ \R^4_2 \oplus \Lambda^2_-\R^4_2$. Let $\{e_1, e_2, e_3, e_4\}$ be an oriented orthonormal frame of $\R^4_2$, i.e.\ $\abs{e_1}^2 = \abs{e_2}^2 = -\abs{e_3}^2 = -\abs{e_4}^2 = 1$ and $\vprodesc{e_i}{e_j} = 0$, $i \neq j$. The frame $\{E^\pm_j:\, j = 1, 2, 3\}$ given by:
\[
\begin{split}
E^\pm_1 &= \tfrac{1}{\sqrt{2}}(e_1 \wedge e_2 \pm e_4 \wedge e_3),\, 
E^\pm_2 = \tfrac{1}{\sqrt{2}}(e_1 \wedge e_3 \pm e_4 \wedge e_2),\,
E^\pm_3 = \tfrac{1}{\sqrt{2}}(e_1 \wedge e_4 \pm e_2 \wedge e_3),
\end{split}
\]
is an orthonormal oriented reference in $\Lambda^2_\pm \R^4_2$, i.e.\ $g(E^\pm_i, E^\pm_j) = \epsilon_i\delta_{ij}$, where $\epsilon_1 = -1$ and $\epsilon_2 = \epsilon_3 = 1$. Hence each $\Lambda^2_\pm \R^4_2$ is isometric to the Lorentz-Minkowski $3$-space. We denote by $\h^2_\pm$ the hyperbolic plane in the $3$-space $\Lambda^2_\pm \R^4_2$.

Finally, if $\{v, w\}$ is an oriented orthonormal frame of a plane $P \in G^+_*(2,4)$, then the map $G^+_*(2,4) \rightarrow \h^2_+ \times \h^2_-$ given by 
\[
P \mapsto \frac{1}{\sqrt{2}} [ v \wedge w + \star(v \wedge w), v\wedge w - \star(v \wedge w)]
\] 
is a diffeomorphism.

\section{Maximal surfaces in anti-De Sitter space}
\label{sec:maximal-AdS}

Let $\phi: \Sigma \rightarrow \ads$ be a spacelike maximal immersion, i.e.\ with zero mean curvature, of an oriented surface $\Sigma$ and $N$ a unit normal vector field to $\phi$. Given a conformal parameter $z = x+iy$ on $\Sigma$, it is well-known (see for instance~\cite{Palmer1990}) that the $2$-differential $\Theta_\phi(z) = \theta(z) \df z \otimes \df z= \vprodesc{\phi_z}{N_z}\df z\otimes \df z$ is holomorphic, where $N$ is the (timelike) unit normal vector field to $\phi$ such that $\{\phi_x, \phi_y, \phi, N\}$ is a positively oriented frame in $\R^4_2$ (we are using subscripts to indicate derivatives). The associated conformal factor $e^{2v}$ satisfies $v_{z\bar{z}} + e^{-2v}\abs{\Theta_\phi}^2 - \frac{1}{4}e^{2v} = 0$.  Moreover, the Frenet equations of the immersion are given by
\begin{equation}\label{eq:frenet-maxima-AdS}
\phi_{zz} = 2v_z\phi_z + \theta N, \quad \phi_{z\bz} = \tfrac{1}{2}e^{2v}\phi, \quad N_z = 2e^{-2v}\theta \phi_{\bz}.
\end{equation}

Conversely, we get the following result (see~\cite[Proposition 2.1]{Palmer1990} and also~\cite[Lemma~3.3]{Perdomo2009}):
\begin{quote}
\itshape
For any solution $v:D \rightarrow \R$, $D \subset \C$, to the equation $v_{z\bar{z}} - \frac{1}{2}\sinh(2v) = 0$ there exists a $1$-parameter family $\phi_t:\C \rightarrow \ads$ of maximal immersions whose induced metric is $e^{2v}\abs{\df z}^2$ and whose Hopf differential is $\Theta_{\phi_t}(z) = \frac{i}{2}e^{it}\df z \otimes \df z$.
\end{quote}

In this section we are going to present some examples of spacelike maximal surfaces in $\ads$ that will be useful in the sequel. The first simple example is the totally geodesic embedding of the hyperbolic plane $\h^2$ into $\ads$ given by
$
\mathbf{B} = \{(z, w) \in \R^4_2 \equiv \C^2:\, \pIm(w) = 0\}
$
, up to isometries of $\ads$.

The second example, that will play an important role in the following section (see Corollary~\ref{cor:Gauss-map-H2xR}), is the so-called \emph{hyperbolic cylinder}
\[
\mathbf{C} = \{(z, w) \in \R^4_2 \equiv \C^2:\, \Re(z)^2 - \Re(w)^2 = \Im(z)^2 - \Im(w)^2 = -\tfrac{1}{2}\}.
\]
It is a complete spacelike maximal surface with vanishing Gauss curvature, constant principal curvatures $\lambda_1 = -\lambda_2 = 1$ and the norm of the second fundamental form is $\abs{\sigma}^2 = 2$. It was characterized by Ishihara~\cite{Ishihara88} as the only complete maximal surface in $\ads$, up to rigid motions, with $\abs{\sigma}^2 = 2$. We can parametrize the hyperbolic cylinder $\mathbf{C}$ by 
\begin{equation}\label{eq:immersion-hyperbolic-cylinder}
\psi_t(x, y) = \frac{1}{\sqrt{2}} (\sinh a_t(x, y), \sinh b_t(x, y), \cosh a_t(x, y), \cosh b_t(x, y) ),
\end{equation}
where 
\[
\begin{split}
a_t(x, y) &= (x+y)\cos \tfrac{t}{2} + (x-y)\sin \tfrac{t}{2}, \\
b_t(x, y) &= (y-x)\cos \tfrac{t}{2} + (x+y)\sin \tfrac{t}{2}.
\end{split}
\]
Then $\psi_t(\R^2) = \mathbf{C}$, $z = x+iy$ is a conformal parameter with conformal factor $e^{2u(x, y)}$ where $u(x, y) = 0$, and the associated Hopf differential is $\Theta_{\psi_t} = \frac{i}{2}e^{it}\df z \otimes \df z$.

Next we are going to show examples invariant under a $1$-parameter group of isometries of $\ads$. In that case, the equation for its conformal factor $v_{z\bar{z}} - \frac{1}{2}\sinh(2v) = 0$ becomes an ordinary differential equation 
\begin{equation}\label{eq:sinh-Gordon-ordinary}
v''(x) -2\sinh(2v(x)) = 0, \text{ with energy } E =  \frac{1}{2}v'(x)^2 - \cosh( 2v(x) ).
\end{equation}
It is possible to integrate explicitly the Frenet system~\eqref{eq:frenet-maxima-AdS} for some values of $E$ obtaining the following result (see also Section~\ref{sec:appendix} where we get all the solutions to the previous equation in terms of Jacobi elliptic functions).

\begin{proposition}\label{prop:examples-AdS}
Let $v: I \subseteq \R \rightarrow \R$ be a solution to~\eqref{eq:sinh-Gordon-ordinary} with energy $E$. Then the map $\phi_E: \Sigma_v = (I \times \R, e^{2v}g_0) \rightarrow \ads$ given by:
\[
\begin{split}
\phi_E&(x, y) = \frac{1}{\sqrt{2E}}\left(e^{v(x)} \cos(\sqrt{2E} y), -e^{v(x)} \sin(\sqrt{2E} y), \right.\\
& \left.-\sqrt{2E + e^{2v(x)}} \cos(\sqrt{2E} G(x)), -\sqrt{2E + e^{2v(x)}} \sin(\sqrt{2E} G(x))\right), \, E > 0, \\
\phi_E&(x,y) = \frac{1}{\sqrt{-2E}}\left( \sqrt{-2E - e^{2v(x)}} \sinh(\sqrt{-2E}G(x)), e^{v(x)}\sinh(\sqrt{-2E}y)\right. \\
	&\left. e^{v(x)}\cosh(\sqrt{-2E}y), \sqrt{-2E - e^{2v(x)}} \cosh(\sqrt{-2E}G(x))\right),\, E < 0, 
\end{split}
\]
is an isometric maximal immersion with associated Hopf differential $\Theta(z) = \frac{1}{2}\df z\otimes \df z$, where $G(x) = \int_0^x \frac{\df t}{2E + e^{2v(t)}}$ and $g_0$ is the Euclidean metric in $\R^2$.

Moreover, all the surfaces $\Sigma_v$, except the one associated to the trivial solution $v(x) = 0$ which is the hyperbolic cylinder, are not complete.
\end{proposition}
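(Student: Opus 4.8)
The plan is to verify directly that the explicit map $\phi_E$ realizes a maximal immersion with the stated data, using only the defining equation \eqref{eq:sinh-Gordon-ordinary}, its first integral $v'(x)^2 = 2E + 2\cosh(2v(x))$, and the identity $G'(x) = (2E + e^{2v(x)})^{-1}$. Existence of \emph{some} such family is in any case guaranteed abstractly by the Palmer--Perdomo result quoted above, since for $v = v(x)$ one has $v_{z\bar z} = \tfrac14(v_{xx}+v_{yy}) = \tfrac14 v'' = \tfrac12\sinh(2v)$; so the real task is to pin down the explicit member whose Hopf differential is $\tfrac12\,\df z\otimes\df z$ (the phase $e^{it}=-i$) and, separately, to settle completeness.

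First I would check that $\phi_E$ takes values in $\ads$: a one-line computation of $\vprodesc{\phi_E}{\phi_E}$ collapses to $-1$ using $\cos^2+\sin^2=1$ together with $e^{2v}-(2E+e^{2v})=-2E$ when $E>0$ (resp.\ $\cosh^2-\sinh^2=1$ and the analogous cancellation when $E<0$). Next I would compute $\phi_x,\phi_y$ and verify $\vprodesc{\phi_x}{\phi_x}=\vprodesc{\phi_y}{\phi_y}=e^{2v}$ and $\vprodesc{\phi_x}{\phi_y}=0$; here the two occurrences of $G'=(2E+e^{2v})^{-1}$ make the cross terms cancel, and the normalization of $\vprodesc{\phi_x}{\phi_x}$ to $e^{2v}$ is exactly where the first integral $v'^2=2E+2\cosh(2v)$ enters. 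This shows that $z=x+iy$ is conformal with factor $e^{2v}$, that $\phi_E$ is spacelike, and that it is an isometry of $(I\times\R,e^{2v}g_0)$ onto its image.

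For maximality and the Hopf differential I would exploit that, for \emph{any} conformal spacelike immersion into $\ads$, differentiating $\vprodesc{\phi}{\phi}=-1$ and $\vprodesc{\phi_z}{\phi_z}=0$ forces $\phi_{z\bar z}$ to have vanishing tangential part and $\phi$-component equal to $\tfrac12 e^{2v}\phi$. Hence maximality is equivalent to the single identity $\phi_{z\bar z}=\tfrac12 e^{2v}\phi$, i.e.\ $\phi_{xx}+\phi_{yy}=2e^{2v}\phi$, and this is the one place where the equation $v''=2\sinh(2v)$ itself (not merely its first integral) is needed. With \eqref{eq:frenet-maxima-AdS} available I would then read the Hopf coefficient off $\phi_{zz}-2v_z\phi_z=\theta N$: computing this vector and checking that it equals $\tfrac12$ times the unit timelike normal $N$ compatible with the positive orientation gives $\Theta=\tfrac12\,\df z\otimes\df z$ (its modulus is already fixed by $\vprodesc{\phi_{zz}}{\phi_{zz}}=-\theta^2$).

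Finally, completeness. The induced metric is $e^{2v}g_0$ with $v=v(x)$, so its curvature is $K=-e^{-2v}v''=-2e^{-2v}\sinh(2v)$. For the trivial solution $v\equiv 0$ one has $E=-1$, the metric is flat and complete on $\R^2$, and $\abs{\sigma}^2=2$; by Ishihara's characterization~\cite{Ishihara88} the image is the hyperbolic cylinder $\mathbf{C}$ of \eqref{eq:immersion-hyperbolic-cylinder}. For a non-constant solution I would run the phase-plane analysis of \eqref{eq:sinh-Gordon-ordinary}: the first integral $v'^2=2(E+\cosh(2v))$ shows that, on every orbit for which the square roots in $\phi_E$ are real, $v$ decreases to $-\infty$ at a finite endpoint $x_*$ of the maximal interval, with $v'^2\sim e^{-2v}$ giving $e^{v}\sim\abs{x-x_*}$; hence the horizontal ray $x\mapsto(x,0)$ leaves every compact set yet has finite length $\int^{x_*} e^{v}\,\df x<\infty$, so $(I\times\R,e^{2v}g_0)$ is incomplete. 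I expect this last point to be the main obstacle: one must check, energy level by energy level (and matching the reality domain $-2E-e^{2v}\ge 0$ when $E<0$), that the relevant piece of each orbit is precisely the one running off to $v\to-\infty$ in finite parameter. The explicit integration in terms of Jacobi elliptic functions carried out in Section~\ref{sec:appendix} makes this global behaviour transparent and can be invoked to organize the cases.
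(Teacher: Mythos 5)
Your plan for the first half of the statement --- checking $\vprodesc{\phi_E}{\phi_E}=-1$, conformality with factor $e^{2v}$ via the first integral, maximality via $\phi_{z\bar z}=\tfrac12 e^{2v}\phi$, and reading the Hopf coefficient off $\phi_{zz}-2v_z\phi_z=\theta N$ --- is sound and is exactly what the paper dismisses as ``straightforward to check''. The genuine gap is in the completeness part. Your key claim, that on every orbit for which the square roots in $\phi_E$ are real, $v$ runs off to $-\infty$ at a finite endpoint $x_*$ so that $e^{v}\sim\abs{x-x_*}$ makes a horizontal ray have finite length, is \emph{false} for the positive solutions at energies $E\le -1$, and these are precisely the delicate cases. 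For $E=-1$ the solution $v(x)=\log\cotanh(x)$ is positive, decreasing, and tends to $0$ as $x\to\infty$; intersecting its domain with the reality region $e^{2v}\le -2E=2$ leaves a half-line on which $v$ never tends to $-\infty$. For $E<-1$ the solution $v_1(x)=-\log\bigl(\tfrac{1}{\lambda}\sn_\mu(\lambda x)\bigr)$ of Lemma~\ref{lm:solutions-sinh-Gordon-ordinary} tends to $+\infty$ at \emph{both} ends of its maximal interval, and the constraint $2E+e^{2v_1}<0$ cuts out a middle subinterval $]c,\ell-c[$ on which $v_1$ is bounded and bounded away from $-\infty$. In both of these cases the incompleteness mechanism is different from the one you propose: the admissible domain is truncated by the reality constraint at a finite parameter value, and since the conformal factor is bounded there (namely $e^{v}\le\sqrt{-2E}$), the truncation point lies at finite Riemannian distance, so a horizontal curve running into it is divergent and of finite length. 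This is exactly how the paper's proof handles these cases (cf.\ Remark~\ref{rmk:examples-maximal-AdS} and the last two paragraphs of its proof), whereas the verification you defer to (``the relevant piece of each orbit is precisely the one running off to $v\to-\infty$'') would come back negative, leaving those cases unproved.

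For the remaining cases your argument does work: when $E>-1$ the first integral gives $v'\ne 0$, so $v$ is monotone and sweeps from $-\infty$ to $+\infty$ in finite parameter time, and the asymptotics $v'^2\sim e^{-2v}$ yield a finite-length divergent ray into the end where $v\to-\infty$; the same applies to the negative solutions $v_2=-v_1$ when $E\le -1$, for which the reality constraint is vacuous. Your treatment of the trivial solution (flat complete metric, $\abs{\sigma}^2=2$, Ishihara's characterization) is fine, indeed more than is needed, since comparison with the parametrization~\eqref{eq:immersion-hyperbolic-cylinder} identifies the image directly. So the proposal needs repair only in one place, but it is an essential one: the blanket ``$v\to-\infty$'' claim must be replaced by the case distinction above, with the bounded-conformal-factor argument supplied for the positive solutions at $E\le-1$.
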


\begin{remark}\label{rmk:examples-maximal-AdS}
The obtained examples in Proposition~\ref{prop:examples-AdS} are invariant by the following $1$-parameter group of isometries depending on the sign of $E$:

\begin{center}
\begin{tabular}{cc}
	$E > 0$ &  $E < 0$ \\ 
	$\begin{pmatrix}
			\cos \theta 	&	-\sin \theta 	&	0 	& 	0 \\
			\sin \theta 	&	\cos \theta 	&	0 	&	0 \\
			0 	&	0 	&	1 	&	0 \\
			0 	& 	0 	& 	0 	& 	1
		\end{pmatrix}$
	& 
	$\begin{pmatrix}
			1 	&	 0 	& 	0 	& 	0 	\\
			0 	&	\cosh \theta 	&	\sinh \theta 	& 	0 \\
			0 	&	\sinh \theta 	&	\cosh \theta 	&	0 \\
			0 	& 	0 	& 	0 	& 	1
		\end{pmatrix}$
\end{tabular}
\end{center}

Besides, when $E < 0$ the immersion $\phi_E$ is only defined for $2E < -e^{2v(x)}$. A deep analysis of the solutions (see Section~\ref{sec:appendix}) shows that there is always a non-empty interval $I' \subseteq I$ where this happens, being $I$ the maximal interval of definition of $v$ (see also the proof of Proposition~\ref{prop:examples-AdS}). 
\end{remark}

\begin{remark}
It is not surprising that the obtained examples are not complete. In~\cite{Perdomo2009}, the author pointed out the difficulties of getting complete maximal immersions in anti-De Sitter space-time and he also provided with complete examples looking for radial solutions of the sinh-Gordon equation. Palmer in~\cite[Theorem I]{Palmer1990} also provide complete minimal examples in $\ads$ in terms of holomorphic quadratic differentials.
\end{remark}

\begin{proof}
Taking equation~\eqref{eq:sinh-Gordon-ordinary} into account, it is straightforward to check that $\phi_E$ is a maximal isometric immersion. We will now show that the metric $e^{2v}g_0$ is non-complete except for the trivial solution $v = 0$., by finding a divergent curve $\gamma$ in $\Sigma$ with finite length. Notice that we can consider the solutions to~\eqref{eq:sinh-Gordon-ordinary} given in Lemma~\ref{lm:solutions-sinh-Gordon-ordinary} for $a_0 = 0$, which are always defined in an interval $I = ]0, \ell[$. Observe that if $v$ is a solution then $-v$ is also a solution (see Section~\ref{sec:appendix}) so we have to deal with both cases.

If $E > -1$, thanks to the symmetries of the solutions (see Remark~\ref{rmk:properties-solutions-sinh-Gordon}.(1)), $v$ and $-v$ are symmetric so, considering the one given in Lemma~\ref{lm:solutions-sinh-Gordon-ordinary} we have that $e^{v(x)} \leq 1$ in $]0,\ell/2[$. Hence the curve $\gamma:]0, \tfrac{\ell}{2}[ \rightarrow \Sigma$, given by $\gamma(t) = \tfrac{\ell}{2} - t$, diverges in $\Sigma$ but has finite length.

If $E = -1$ we have three different solutions: (1) $v(x) = 0$, which produces the hyperbolic cylinder which is complete; (2) $v(x) = \log \tanh(x)$, in this case $\Sigma = (\R^+\times \R, \tanh^2(x)g_0)$ and so the curve $\gamma(t) = (a-t,0)$, $t\in ]0,a[$, $a\in \R$ arbitrary, diverges in $\Sigma$ but has finite length; and (3) $v(x) = \log \cotanh(x)$. In this case the immersion is only defined when $\cotanh^2(x) < 2$ (see Remark~\ref{rmk:examples-maximal-AdS}), i.e. $\Sigma = (]0, \arccotanh(\sqrt{2})[ \times \R, \cotanh^2(x)g_0)$. Hence the curve $\gamma(t) = (t,0)$, $t\in]\tfrac{1}{2}, \arccotanh(\sqrt{2})[$ diverges in $\Sigma$ but has finite length.

Finally, if $E < -1$ we have two different types of solutions, namely $v_1(x) = -\log(\tfrac{1}{\lambda}\sn_\mu(\lambda x))$ and $v_2(x) = -v_1(x)$ (see Lemma~\ref{lm:solutions-sinh-Gordon-ordinary}). In the first case the immersion is only defined when $2E+e^{2v_1(x)} < 0$, i.e.\ for $x \in J = ]c, \ell - c[$ where $c = \tfrac{1}{\lambda}\arcsn_\mu(\tfrac{\lambda}{\sqrt{-2E}})$ (see Lemma~\ref{lm:solutions-sinh-Gordon-ordinary} for the definition of $\lambda$ and $\mu$). But $e^{v_1(x)}\leq \sqrt{-2E}$ for $x \in ]c, \ell -c[$ and so $\Sigma$ is also incomplete in this case.

In the second case, $2E +e^{2v_2(x)} < 0$ so $\Sigma = (]0, \ell[ \times \R, e^{2v_2(x)}g_0)$, but $e^{v_2(x)}\leq \tfrac{1}{\lambda}$ so the curve $\gamma(t) = (\ell/2 - t, 0)$, $t \in ]0, \ell/2[$, diverges in $\Sigma$ and has finite length.
\end{proof}

\section{The Gauss map of a pair of maximal surfaces in $\ads$}
\label{sec:Gauss-map-of-a-pair}

Let $\phi: \Sigma \rightarrow \ads \subset \R^4_2$ be a spacelike immersion of an oriented surface $\Sigma$. The \emph{Gauss map} of $\phi: \Sigma \rightarrow \R^4_2$ is the map $\nu_\phi = (\nu^+_\phi, \nu^-_\phi): \Sigma \rightarrow \h^2_+ \times \h^2_-$ defined by
\[
\nu^\pm_\phi(p) = \frac{1}{\sqrt{2}}[e_1 \wedge e_2 \pm N(p) \wedge \phi(p)],
\]
where $\{e_1, e_2\}$ is an oriented orthonormal basis in $T_p \Sigma$ and $N$ is the unit (timelike) normal vector field to the immersion $\phi$ such that $\{e_1, e_2, \phi(p), N_p\}$ is oriented in $\R^4_2$ (see Section~\ref{subsec:definition-gauss-map}). 

If $\phi$ is maximal then its Gauss map is a Lagrangian minimal immersion in $\h^2\times\h^2$ (see~\cite{Torralbo2007}). For instance: 
\begin{itemize}
	\item The Gauss map of the totally geodesic embedding of the hyperbolic plane in $\ads$ given in Section~\ref{sec:maximal-AdS} is the diagonal map $\nu: \h^2\to \h^2\times \h^2$, $\nu(p) = (p,p)$.
	\item The Gauss map of the hyperbolic cylinder is the product of two geodesics of $\h^2$.
\end{itemize}

\begin{theorem}\label{thm:Gauss-map-pair}
Let $\Sigma$ be a Riemann surface and $\phi$, $\psi: \Sigma \rightarrow \ads$ two conformal spacelike maximal immersions with the same Hopf differentials $\Theta_\phi = \Theta_\psi$. Then
\[
	\nu_{\{\phi, \psi\}}: (\nu^+_\phi, \nu^-_\psi): \Sigma \rightarrow \h^2 \times \h^2
\]
is a conformal minimal immersion. Moreover, the induced metric by $\nu_{\{\phi, \psi\}}$ is
\[
	g = \frac{1}{2}\Bigl[ (2 + \abs{\sigma_\phi}^2) g_\phi + (2 + \abs{\sigma_\psi}^2)g_\psi\Bigr],
\]
where $g_\phi$ and $g_\psi$ are the induced metrics on $\Sigma$ by $\phi$ and $\psi$, respectively. Here $|\sigma_\phi|$ and $|\sigma_\psi|$ are the lengths of the second fundamental forms of $\phi$ and $\psi$ in $\ads$, computed with respect to $g_\phi$ and $g_\psi$, respectively. 
\end{theorem}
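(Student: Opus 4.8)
The plan is to establish conformality and minimality of $\nu_{\{\phi,\psi\}}$ separately, both as consequences of a single differentiation of the Gauss map components through the Frenet system~\eqref{eq:frenet-maxima-AdS}. Fix a conformal parameter $z$ on $\Sigma$; since $\phi$ and $\psi$ are conformal for the same Riemann surface, this $z$ serves for both, with conformal factors $e^{2v_\phi}$, $e^{2v_\psi}$ and common Hopf density $\theta:=\theta_\phi=\theta_\psi$. For a single maximal immersion $\phi$ with factor $e^{2v}$, I would write the unit tangent bivector as $e_1\wedge e_2=-2i\,e^{-2v}\phi_z\wedge\phi_{\bar z}$, so that
\[
\nu^\pm_\phi=\tfrac{1}{\sqrt2}\bigl[-2i\,e^{-2v}\phi_z\wedge\phi_{\bar z}\pm N\wedge\phi\bigr],
\]
and differentiate in $z$, substituting $\phi_{zz}=2v_z\phi_z+\theta N$, $\phi_{z\bar z}=\tfrac12 e^{2v}\phi$ and $N_z=2e^{-2v}\theta\,\phi_{\bar z}$. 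The tangential $v_z$-terms cancel, leaving $(\nu^\pm_\phi)_z$ as a combination of the four bivectors $N\wedge\phi_{\bar z},\ \phi_z\wedge\phi,\ \phi_{\bar z}\wedge\phi,\ N\wedge\phi_z$.

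For conformality I would then evaluate the induced $(2,0)$-form with the metric $g$ on $\Lambda^2\R^4_2$. Using $\vprodesc{N}{N}=\vprodesc{\phi}{\phi}=-1$, $\vprodesc{\phi_z}{\phi_z}=0$, $\vprodesc{\phi_z}{\phi_{\bar z}}=\tfrac12 e^{2v}$ and the orthogonality of $N,\phi$ to $\phi_z,\phi_{\bar z}$, the only nonvanishing products among the four bivectors are $g(N\wedge\phi_{\bar z},N\wedge\phi_z)=g(\phi_z\wedge\phi,\phi_{\bar z}\wedge\phi)=\tfrac12 e^{2v}$, and the computation collapses to
\[
g\bigl((\nu^\pm_\phi)_z,(\nu^\pm_\phi)_z\bigr)=\mp 2i\,\theta.
\]
The crucial point is that this depends on $\phi$ only through $\theta$. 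Hence, because $\theta_\phi=\theta_\psi$,
\[
g\bigl((\nu_{\{\phi,\psi\}})_z,(\nu_{\{\phi,\psi\}})_z\bigr)=g\bigl((\nu^+_\phi)_z,(\nu^+_\phi)_z\bigr)+g\bigl((\nu^-_\psi)_z,(\nu^-_\psi)_z\bigr)=-2i\theta+2i\theta=0,
\]
which is exactly the conformality of the pair. (Applied to $\phi$ alone, the same identity gives $-2i\theta_\phi+2i\theta_\phi=0$, so each full Gauss map is already conformal for the structure of $\Sigma$.)

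For minimality I would argue by harmonicity. By~\cite{Torralbo2007} the full Gauss map $(\nu^+_\phi,\nu^-_\phi)$ is a minimal immersion in $\h^2\times\h^2$, and by the remark just made it is conformal for the conformal class of $\Sigma$; a conformal minimal immersion from a surface is harmonic, and harmonicity into the Riemannian product $\h^2\times\h^2$ is equivalent to harmonicity of each factor. Thus $\nu^+_\phi$ is a harmonic map into $\h^2_+$, and likewise $\nu^-_\psi$ into $\h^2_-$, both for the common conformal class of $\Sigma$ (harmonicity of maps from a surface being a conformal invariant of the domain, which is where the hypothesis that $\phi,\psi$ share the Riemann surface — not merely the metric — enters). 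Consequently $\nu_{\{\phi,\psi\}}=(\nu^+_\phi,\nu^-_\psi)$ is harmonic, and a conformal harmonic map from a Riemann surface is a branched minimal immersion.

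It remains to compute the metric and rule out branch points. The same four bivectors give the $(1,1)$-part
\[
g\bigl((\nu^\pm_\phi)_z,(\nu^\pm_\phi)_{\bar z}\bigr)=\tfrac12 e^{2v}+2e^{-2v}\abs{\theta}^2=\tfrac12 e^{2v}\bigl(1+\tfrac12\abs{\sigma_\phi}^2\bigr),
\]
where I use the relation $\abs{\sigma_\phi}^2=8\,e^{-4v}\abs{\theta}^2$ between the Hopf density and the length of the second fundamental form (the principal curvatures of a maximal surface being $\pm 2e^{-2v}\abs{\theta}$; this is consistent with $\abs{\sigma}^2=2$ on the hyperbolic cylinder). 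Since the pair is conformal its induced metric is $g_{\{\phi,\psi\}}=2\,g\bigl((\nu_{\{\phi,\psi\}})_z,(\nu_{\{\phi,\psi\}})_{\bar z}\bigr)\abs{\df z}^2$, and adding the $+$-contribution of $\phi$ to the $-$-contribution of $\psi$ and recalling $g_\phi=e^{2v_\phi}\abs{\df z}^2$, $g_\psi=e^{2v_\psi}\abs{\df z}^2$ yields
\[
g_{\{\phi,\psi\}}=\bigl(1+\tfrac12\abs{\sigma_\phi}^2\bigr)g_\phi+\bigl(1+\tfrac12\abs{\sigma_\psi}^2\bigr)g_\psi=\tfrac12\bigl[(2+\abs{\sigma_\phi}^2)g_\phi+(2+\abs{\sigma_\psi}^2)g_\psi\bigr],
\]
as claimed; being a sum of positive-definite terms it is nondegenerate, so $\nu_{\{\phi,\psi\}}$ is a genuine immersion. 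The main obstacle is the bookkeeping of this bivector calculus: tracking the signs produced by the timelike directions ($\vprodesc{N}{N}=\vprodesc{\phi}{\phi}=-1$) in the index-$2$ metric $g$ on $\Lambda^2\R^4_2$, and pinning down the normalization in $\abs{\sigma_\phi}^2=8e^{-4v}\abs{\theta}^2$; conceptually the only delicate point is the appeal to conformal invariance of harmonicity, which is what lets the two harmonic factors, coming from different immersions, live over one common domain.
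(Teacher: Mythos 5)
Your proof is correct, and its computational core coincides with the paper's: you differentiate $\nu^{\pm}$ through the Frenet equations~\eqref{eq:frenet-maxima-AdS} and extract the $(2,0)$- and $(1,1)$-parts, obtaining $g\bigl((\nu^\pm)_z,(\nu^\pm)_z\bigr)=\mp 2i\theta$ and $g\bigl((\nu^\pm)_z,(\nu^\pm)_{\bar z}\bigr)=\tfrac12 e^{2v}+2e^{-2v}\abs{\theta}^2$, which are exactly the identities in the paper's proof; the paper merely organizes the computation by expanding $(\nu^\pm)_z$ in the frame $E^\pm_2,E^\pm_3$ of $\Lambda^2_\pm\R^4_2$, whereas you pair the four bivectors $N\wedge\phi_{\bar z}$, $\phi_z\wedge\phi$, $\phi_{\bar z}\wedge\phi$, $N\wedge\phi_z$ directly, and your table of nonvanishing products is right. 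Where you genuinely diverge is the harmonicity step. The paper invokes Palmer~\cite[Theorem 3.2]{Palmer1991}: any spacelike immersion in $\R^4_2$ with parallel mean curvature vector has harmonic Gauss map components $\nu^\pm$, applied separately to $\phi$ and $\psi$ --- a direct statement that needs neither conformality nor minimality of the full Gauss map as input. You instead bootstrap from the fact, cited from~\cite{Torralbo2007}, that the full Gauss map of a \emph{single} maximal immersion is a minimal immersion in $\h^2\times\h^2$: your conformality identity in the diagonal case $\psi=\phi$ makes it conformal, hence harmonic; harmonicity into a Riemannian product splits into harmonicity of the factors; and conformal invariance of harmonicity on surfaces lets components coming from the two different immersions (with different induced metrics but the same conformal structure) be harmonic over the common domain, so the mixed pair is harmonic. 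This chain is sound and not circular within this paper, and it buys a pleasant economy --- the mixed-pair theorem is recovered from its diagonal case plus soft general facts --- at the price of having to establish conformality first and of leaning on the immersion property implicit in the citation; the paper's route via Palmer is more direct and pointwise. Your metric computation, the normalization $\abs{\sigma_\phi}^2=8e^{-4v}\abs{\theta}^2$, and the remark that positive-definiteness of the resulting metric excludes branch points all agree with the paper.
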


\begin{remark}~
\begin{enumerate}
	\item If $\phi = \psi$, then $\nu_{\{\phi, \psi\}} = \nu_\phi$ is the Gauss map of $\phi$. 
	\item Given a maximal immersion $\phi: \Sigma \rightarrow \ads$, its \emph{polar} immersion (possibly branched) is $N: \Sigma \rightarrow \ads$, where $N$ is a unit normal vector field to $\phi$. $N$ is also a maximal conformal immersion with the same Hopf differential as $\phi$. Nevertheless, $\nu_{\{\phi, N\}}$ is congruent to $\nu_\phi$, the Gauss map of $\phi$.
	\item Given $A \in \mathcal{O}_2(4)$, then it is easy to check that $\nu_{\{A\phi, \psi\}}$ is congruent to $\nu_{\{\phi, \psi\}}$.
\end{enumerate}

\end{remark}

\begin{proof}
	The immersion $\phi: \Sigma \rightarrow  \R^4_2$ has parallel mean curvature vector because it is contained in $\ads$ as a maximal surface. From~\cite[Theorem 3.2]{Palmer1991} we deduce that each $\nu^\pm_\phi$ is a harmonic map. Analogously $\nu_\psi^\pm:\Sigma \rightarrow \h^2$ are also harmonic maps. Hence $\nu_{\{\phi, \psi\}} = (\nu_\phi^+, \nu_\psi^-):\Sigma \rightarrow \h^2\times \h^2$ is a harmonic map.

	It remains to check that $\nu$ is conformal (and so minimal). Let $z = x+iy$ a conformal parameter over $\Sigma$ and $N_\phi$, $N_\psi$ the temporal unit normal vector field to $\phi$ and $\psi$ respectively such that $\{\phi_x, \phi_y, \phi, N_\phi\}$ and $\{\psi_x, \psi_y, \psi, N_\psi\}$ are oriented references in $\R^4_2$. Since $\phi$ and $\psi$ are conformal immersions the induced metrics by $\phi$ and $\psi$ in $\Sigma$ are given by $g_\phi = e^{2u}\abs{\df z}^2$ and $g_\psi = e^{2w}\abs{\df z}^2$ for certain functions $u$ and $w$. Moreover, $\Theta_\phi = \Theta_\psi = \theta \df z \otimes \df z$ for some function $\theta(z)$ by hypothesis. 

	We can express the component of the Gauss map $\nu_{\{\phi, \psi\}}$ as
	\[
		\begin{split}
			v_\phi^+(z) = \tfrac{1}{\sqrt{2}}\bigl( -2ie^{-2u} \phi_z \wedge \phi_{\bz}  - \phi \wedge N_\phi \bigr), \\
			v_\psi^-(z) = \tfrac{1}{\sqrt{2}}\bigl( -2ie^{-2w} \psi_z \wedge \psi_{\bz}  + \psi \wedge N_\psi \bigr), \\
		\end{split}
	\]
	Taking the Frenet equations~\eqref{eq:frenet-maxima-AdS} of $\phi$ and $\psi$ and Section~\ref{subsec:definition-gauss-map} into account, we easily get that
	\[
	\begin{split}
		(\nu_\phi^+)_z &= \tfrac{1}{2}e^u(-i + 2\theta e^{-2u})E^+_2(z) + \tfrac{i}{2}e^u(i + 2\theta e^{-2u})E^+_3(z), \\
		(\nu_\psi^-)_z &= \tfrac{1}{2}e^w(-i - 2\theta e^{-2w})E^-_2(z) + \tfrac{i}{2}e^w(-i + 2\theta e^{-2w})E^-_3(z). \\
	\end{split}
	\]
	Then we deduce from the previous equations that:
	\begin{align*}
		\vprodesc{(\nu_\phi^+)_z}{(\nu_\phi^+)_z} = -2i\theta, 
		& & \vprodesc{(\nu_\psi^-)_z}{(\nu_\psi^-)_z} = 2i\theta, \\
		|\nu_\phi^+|^2 = \tfrac{1}{2}\bigl(e^{2u} + 4e^{-2u}\abs{\theta}^2 \bigr), 
		& & |\nu_\psi^-|^2 = \tfrac{1}{2}\bigl(e^{2w} + 4e^{-2w}\abs{\theta}^2 \bigr).
	\end{align*}
	Finally, $\vprodesc{\nu_z}{\nu_z} = 0$ and so $\nu = \nu_{\{\phi,\psi\}}$ is a conformal map. Moreover, from $8\abs{\theta}^2 = e^{4u}\abs{\sigma_\phi}^2 = e^{4w}\abs{\sigma_\psi}^2$ and 
	\[
		\abs{\nu_z}^2 = \tfrac{1}{2}\bigl(e^{2u} + e^{2w} + 4\abs{\theta}^2(e^{-2u} + e^{-2w}) \bigr),
	\]
	we get the expression of the induced metric on $\Sigma$ by $\nu$.
\end{proof}

Now, let $\phi: \Sigma \rightarrow \ads$ be a conformal maximal immersion. There is no loss of generality in assuming that locally the Hopf differential $\Theta(z) = \tfrac{i}{2}e^{it}\df z \otimes \df z$ (observe that either $\Theta = 0$ and so the immersion is totally geodesic or the zeroes of $\Theta$ are isolated and we can locally normalize $\Theta$ away from the zeroes). Then $\phi$ and $\psi_t$, the immersion of the hyperbolic cylinder given in Section~\ref{sec:maximal-AdS}, are two conformal maximal immersions with the same Hopf differentials. Then, thanks to the previous theorem, $\hat{\nu}_\phi = \nu_{\{\phi, \psi_t\}}: \Sigma \rightarrow \h^2\times\h^2$ is a minimal immersion that we call the \emph{modified Gauss map} of $\phi$. Now, the Gauss map of the hyperbolic cylinder $\psi_t$ is the product of two geodesics in $\h^2\times\h^2$ so its second component can be viewed as a map from $\Sigma$ to $\R$. Hence the \emph{modified Gauss map of a maximal immersion $\phi:\Sigma \rightarrow \ads$ is a conformal minimal immersion $\hat{\nu}_\phi: \Sigma \rightarrow \h^2\times \R$}. We get the following result:

\begin{corollary}\label{cor:Gauss-map-H2xR}
	Let $u:D \subseteq \C \rightarrow \R$ be a solution of $u_{z\bz} - \tfrac{1}{2}\sinh(2u) = 0$. Then the $1$-parameter family of minimal immersions $\Phi_t: (D, 4\cosh^2 u \abs{\df z}^2) \rightarrow \h^2\times \R$ with Hopf differential $\Theta = e^{it}\df z\otimes \df z$ (see~\cite[Corollary~10]{HST2008}) associated to $u$ is given by:
	\[
		\Phi_t(z) = \bigl( \nu_{\phi_t}^+(z), 2\,\pIm(ze^{it/2})\bigr),
	\]
	where $\phi_t: (D, e^{2u}\abs{\df z}^2) \rightarrow \ads$ is the $1$-parameter family of immersions associated to $u$ with Hopf differential $\Theta_{\phi_t} = \tfrac{i}{2}e^{it}\df z \otimes \df z$.
\end{corollary}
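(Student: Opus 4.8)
The plan is to recognise the stated map as the \emph{modified Gauss map} $\hat{\nu}_{\phi_t} = \nu_{\{\phi_t,\psi_t\}}$ introduced just before the corollary, where $\psi_t$ is the hyperbolic cylinder of~\eqref{eq:immersion-hyperbolic-cylinder}. Since $\phi_t$ and $\psi_t$ are conformal maximal immersions sharing the Hopf differential $\tfrac{i}{2}e^{it}\df z\otimes\df z$, Theorem~\ref{thm:Gauss-map-pair} applies directly and shows that $\nu_{\{\phi_t,\psi_t\}} = (\nu^+_{\phi_t},\nu^-_{\psi_t})$ is a conformal minimal immersion into $\h^2\times\h^2$ whose first component is exactly $\nu^+_{\phi_t}$, as required. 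The induced metric is read off from the metric formula in that theorem: for the cylinder $\abs{\sigma_{\psi_t}}^2 = 2$ and $g_{\psi_t} = \abs{\df z}^2$, while for $\phi_t$ the identity $8\abs{\theta}^2 = e^{4u}\abs{\sigma_{\phi_t}}^2$ with $\theta = \tfrac{i}{2}e^{it}$ gives $\abs{\sigma_{\phi_t}}^2 = 2e^{-4u}$ and $g_{\phi_t} = e^{2u}\abs{\df z}^2$. Substituting yields $g = (e^{2u}+e^{-2u}+2)\abs{\df z}^2 = 4\cosh^2 u\,\abs{\df z}^2$, matching the claim.

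It then remains to identify the second component $\nu^-_{\psi_t}$ with the real number $2\Im(ze^{it/2})$ under the totally geodesic inclusion $i:\h^2\times\R\hookrightarrow\h^2\times\h^2$, and this is the crux. The key computation is to evaluate $\nu^-_{\psi_t}$ explicitly. I would write the cylinder as $\psi_t = \alpha + \beta$, where $\alpha(a_t) = \tfrac{1}{\sqrt2}(\sinh a_t,0,\cosh a_t,0)$ and $\beta(b_t) = \tfrac{1}{\sqrt2}(0,\sinh b_t,0,\cosh b_t)$ are hyperbolae in the two orthogonal timelike planes $\mathrm{span}\{e_1,e_3\}$ and $\mathrm{span}\{e_2,e_4\}$. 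A short computation shows that $\{\alpha',\beta'\}$ spans the (spacelike) tangent plane, that $N = \beta-\alpha$ is the unit timelike normal for which $\{\psi_x,\psi_y,\psi,N\}$ is positively oriented, and hence that $\nu^-_{\psi_t} = \tfrac{1}{\sqrt2}(\psi_x\wedge\psi_y - N\wedge\psi) = \sqrt2(\alpha'\wedge\beta' + \alpha\wedge\beta)$. Expanding the two $2$-vectors in the frame $\{E^-_j\}$ and collapsing with the hyperbolic addition formulas leaves only the $E^-_1$ and $E^-_3$ components, giving $\nu^-_{\psi_t}(z) = \cosh(a_t+b_t)\,E^-_1 + \sinh(a_t+b_t)\,E^-_3$.

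This is a unit-speed geodesic of $\h^2_-$ parametrised by its arclength $a_t+b_t$, and expanding~\eqref{eq:immersion-hyperbolic-cylinder} gives $a_t+b_t = 2(x\sin\tfrac{t}{2}+y\cos\tfrac{t}{2}) = 2\Im(ze^{it/2})$. Under the inclusion $i$, which identifies the $\R$-factor with a unit-speed geodesic of the second $\h^2$, the second component of the modified Gauss map is therefore the arclength coordinate $2\Im(ze^{it/2})$, so that $\Phi_t(z) = (\nu^+_{\phi_t}(z),2\Im(ze^{it/2}))$; the metric is as computed above, and the Hopf differential $e^{it}\df z\otimes\df z$ follows, in the normalisation of~\cite{HST2008}, from the height function $h = 2\Im(ze^{it/2})$ (for which $h_z = -ie^{it/2}$). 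I expect the main obstacle to be purely computational bookkeeping: tracking signs, the orientation of $N$, and the correct $\star$-eigenspace so that the wedge-product calculation genuinely lands in $\Lambda^2_-\R^4_2$ and produces the geodesic with precisely the arclength normalisation $a_t+b_t$. Once the splitting $\psi_t = \alpha+\beta$ is in place, these steps are routine.
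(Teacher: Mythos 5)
Your proposal is correct and follows essentially the same route as the paper: apply Theorem~\ref{thm:Gauss-map-pair} to the pair $(\phi_t,\psi_t)$, read off the metric $4\cosh^2 u\,\abs{\df z}^2$, and show that $\nu^-_{\psi_t}$ traces the geodesic $\cosh(2\Im(ze^{it/2}))E^-_1 + \sinh(2\Im(ze^{it/2}))E^-_3$, which is exactly the formula the paper asserts. The only difference is that you supply the details the paper leaves as ``a straightforward computation'' (via the clean splitting $\psi_t = \alpha + \beta$, which checks out, including $a_t+b_t = 2\Im(ze^{it/2})$) and you derive the Hopf differential from the height function rather than from the paper's identity $\Theta_{\nu_t} = -2i\,\Theta_{\phi_t}$; both normalizations give $e^{it}\df z\otimes\df z$.
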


\begin{proof}
	Let $\psi_t = (\C, \abs{\df z}^2) \rightarrow \ads$ the immersion of the hyperbolic cylinder given in equation~\eqref{eq:immersion-hyperbolic-cylinder}. Then $\phi_t, \psi_t:D \subseteq \C \rightarrow \ads$ are conformal maximal immersions with the same Hopf differentials. Hence, applying the previous theorem we get that $\nu_t = \nu_{\{\phi_t, \psi_t\}}$ is a conformal minimal immersion in $\h^2\times \h^2$ with induced metric $4\cosh^2 u\abs{\df z}^2$. 

	Furthermore, a straightforward computation shows that
	\[
	\nu_\psi^-(z) = \cosh[2\, \pIm(ze^{it/2})] E_1^- + \sinh[2\, \pIm(z e^{it/2}) ]E_3^-,
	\] 
	where $\{E_1^-, E_2^-, E_3^-\}$ is the orthonormal reference in $\Lambda^2_-\R^4_2$ associated with the canonical base of $\R^4_2$ (see Section~\ref{subsec:definition-gauss-map}). Hence, as we have mentioned above, $\nu_\psi^-(D)$ is contained in a geodesic of $\h^2$. Considering $\R$ embedded in $\h^2$ as such geodesic we get the result. Finally, it is easy to check that $\Theta_{\nu_t} = -2i \Theta_{\phi_t} = \Theta$.
\end{proof}

The next result is a local converse of Theorem~\ref{thm:Gauss-map-pair} in the special case of surfaces immersed in $\h^2\times \R$.

\begin{theorem}\label{thm:local-correspondence}
	Let $\phi: \Sigma \rightarrow \h^2\times \R$ an isometric minimal immersion of a simply connected Riemann surface $\Sigma$ satisfying $\nu^2 < 1$, where $\nu =\prodesc{N}{\partial_t}$. Then, there exists a conformal maximal immersion $\psi:\Sigma \rightarrow \ads$ such that $\phi = \hat{\nu}_{\psi}$, up to an ambient isometry.
\end{theorem}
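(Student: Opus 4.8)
The plan is to recover, from the intrinsic and extrinsic geometry of $\phi$, a solution $u$ of the sinh-Gordon equation together with a distinguished conformal coordinate, to feed this $u$ into the existence result quoted in Section~\ref{sec:maximal-AdS} so as to produce a maximal immersion $\psi$ in $\ads$, and finally to identify $\phi$ with $\hat\nu_\psi$ by a Bonnet-type uniqueness theorem for surfaces in $\h^2\times\R$.

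First I would exploit the height function $h=\pi_\R\circ\phi:\Sigma\rightarrow\R$. Since $\partial_t$ is parallel and $\phi$ is minimal, $h$ is harmonic, so in any conformal parameter $h_z$ is holomorphic and $\Theta=-h_z^2\,\df z\otimes\df z$ is a holomorphic quadratic differential; this is exactly the Hopf differential appearing in Corollary~\ref{cor:Gauss-map-H2xR}, because conformality gives $\langle(\pi_{\h^2}\circ\phi)_z,(\pi_{\h^2}\circ\phi)_z\rangle=-h_z^2$. The hypothesis $\nu^2<1$ is equivalent to $\abs{\nabla h}^2=1-\nu^2>0$, i.e.\ $h$ has no critical points, so $h_z$ is nowhere zero. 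Because $\Sigma$ is simply connected, I can then integrate $\df w=h_z\,\df z$ to obtain a global conformal coordinate $w$ in which $h_w\equiv1$, hence $h=2\,\Re w$ (after a rotation, $h=2\,\Im(we^{it/2})$ as in the Corollary). In this coordinate the induced metric becomes $\lambda\,\abs{\df w}^2$ with $\lambda=4\abs{h_w}^2/(1-\nu^2)=4/(1-\nu^2)$, and $\Theta=-\df w\otimes\df w$.

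Next I would set $u:=\arctanh\nu$, which is well defined precisely because $\nu^2<1$. Then $1-\nu^2=\operatorname{sech}^2u$ and $\lambda=4\cosh^2u$, so the metric is exactly $4\cosh^2u\,\abs{\df w}^2$, the conformal factor predicted by Corollary~\ref{cor:Gauss-map-H2xR}. The decisive step is to show that $u$ solves $u_{w\bar w}-\tfrac12\sinh(2u)=0$. I would derive this from the Gauss--Codazzi equations of $\phi$ as a minimal surface in $\h^2\times\R$: combining the Gauss equation $K=\det S-\nu^2$ with the structural identities $\nabla\nu=-S\,T$ and $\nabla_XT=\nu\,SX$ for $T=\nabla h$, and inserting $h=2\,\Re w$, $\nu=\tanh u$ and $K=-4e^{-2\omega}\omega_{w\bar w}$ with $e^{2\omega}=4\cosh^2u$, the whole system should collapse to the sinh-Gordon equation for $u$. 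I expect this computation---verifying that the $\h^2\times\R$ structure equations are equivalent, in the height-adapted coordinate, to sinh-Gordon---to be the main obstacle; it can alternatively be read off from the associated-family description in~\cite{HST2008}.

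Finally, with $u$ a solution of the sinh-Gordon equation on the simply connected domain, the existence result quoted in Section~\ref{sec:maximal-AdS} yields a $1$-parameter family $\phi_t:\Sigma\rightarrow\ads$ of maximal immersions with induced metric $e^{2u}\abs{\df w}^2$ and Hopf differential $\tfrac{i}{2}e^{it}\df w\otimes\df w$. Choosing the member $\psi:=\phi_{t_0}$ whose phase matches that of $-h_w^2$, Corollary~\ref{cor:Gauss-map-H2xR} shows that $\hat\nu_\psi:\Sigma\rightarrow\h^2\times\R$ is a conformal minimal immersion with induced metric $4\cosh^2u\,\abs{\df w}^2$, Hopf differential $e^{it_0}\df w\otimes\df w$ and angle function $\tanh u$---hence the same first fundamental form, the same Hopf differential (so the same shape operator, both immersions being minimal), and the same pair $(T,\nu)$ as $\phi$. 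Since $\Sigma$ is simply connected, the fundamental theorem for isometric immersions into $\h^2\times\R$ (Daniel's Bonnet-type theorem, see~\cite{Daniel2009}) then gives $\phi=\hat\nu_\psi$ up to an ambient isometry, as claimed. The sign ambiguity in $u=\arctanh\nu$ is harmless, since $-u$ is again a solution and corresponds to a reflection.
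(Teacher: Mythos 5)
Your proposal is correct and follows essentially the same route as the paper's proof: normalize the nowhere-vanishing holomorphic height differential $\prodesc{\phi_w}{\partial_t}\df w$ to obtain an adapted conformal coordinate, set $u=\arctanh\nu$ so the metric is $4\cosh^2 u\,\abs{\df z}^2$ and $u$ solves the sinh-Gordon equation, build $\psi$ from the existence result of Section~\ref{sec:maximal-AdS}, and conclude by matching fundamental data. The only difference is bookkeeping: where you propose to derive the sinh-Gordon equation from the Gauss--Codazzi system and invoke Daniel's Bonnet-type theorem, the paper outsources both steps to~\cite[Theorem 2.3]{FM2010}, which expresses all fundamental data of $\phi$ in terms of $u$.
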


\begin{proof}
Let $w$ a conformal parameter over $\Sigma$. Then $\Upsilon(w) = \prodesc{\phi_w}{\partial_t}\df w$ is a holomorphic  1-form without zeroes (note that $\abs{\Upsilon}^2 = \tfrac{1}{4}(1-\nu^2) > 0$ by assumption). Then we can always find another conformal parameter $z$ such that $\Upsilon = \df z$. The conformal factor induced by $\phi$ in this new parameter is $4\cosh^2 u$, where $u = \tanh(\nu)$ satisfies $u_{z\bz} - \tfrac{1}{2}\sinh(2u) = 0$. Moreover, the fundamental data of the immersion $\phi$ can be expressed in terms of $u$ (see~\cite[Theorem 2.3]{FM2010}).

Let $\psi: \Sigma \rightarrow \ads$ be the maximal conformal immersion associated to $u$ with Hopf differential $\Theta_\psi(z) = \frac{i}{2}\df z\otimes \df z$ (see Section~\ref{sec:maximal-AdS}). Then, $\hat{\nu}_\psi: \Sigma \rightarrow \h^2\times \R$ is a minimal immersion with the same fundamental data as $\phi$ and so both immersions differ in an ambient isometry.
\end{proof}

\begin{remark}
It is possible to get a similar result for minimal immersion of $\h^2\times \h^2$ without complex points as in~\cite[Theorem 3]{TU2013}, that is, every minimal immersion in $\h^2\times\h^2$ without complex points is locally congruent to the Gauss map of the pair of two maximal immersion in the anti-De Sitter space-time.
\end{remark}

\section{Examples}\label{sec:examples}

In this section we are going to use Corollary~\ref{cor:Gauss-map-H2xR} to compute the minimal immersions associated to the maximal immersions in $\ads$ given by Proposition~\ref{prop:examples-AdS}. As we shall see, the obtained examples are invariant by $1$-parameter groups of isometries of $\h^2\times\R$, namely, \emph{elliptic} and \emph{hyperbolic} screw motions (see figures~\ref{fig:positive-energy} and~\ref{fig:negative-energy}). Moreover,  although the considered maximal immersions in $\ads$ are not complete (see Remark~\ref{rmk:examples-maximal-AdS}), their Gauss maps, in the sense of Corollary~\ref{cor:Gauss-map-H2xR}, are complete immersions.

{\itshape
	Let $v: I \subseteq \R \rightarrow \R$ be a solution of $v''(x) - 2\sinh(2v) = 0$ with energy $E$ (cf.\ equation~\eqref{eq:sinh-Gordon-ordinary}). Then, the map $\Phi_E: \Sigma = (I \times \R, 4\cosh^2(v)g_0) \rightarrow \h^2\times\R$ given by:
	\[
	\begin{split}
		\Phi_E&(x,y) = \tfrac{1}{\sqrt{2E}}\left(v'(x), \tfrac{\sqrt{2E}e^{-v(x)}\cos \sqrt{2E}\bigl(y-G(x) \bigr) - v'(x)e^{v(x)}\sin \sqrt{2E}\bigl(y-G(x)\bigr)}{\sqrt{2E + e^{2v(x)}}}, \right. \\
		&\left. \tfrac{\sqrt{2E}e^{-v(x)}\sin \sqrt{2E}\bigl(y-G(x) \bigr) + v'(x)e^{v(x)}\cos \sqrt{2E}\bigl(y-G(x)\bigr)}{\sqrt{2E + e^{2v(x)}}}, 2\sqrt{E}(y-x) \right),\quad E > 0, \\
		\Phi_E&(x,y) = \tfrac{1}{\sqrt{-2E}}\left( \tfrac{\sqrt{-2E}e^{-v(x)} \cosh\sqrt{-2E}\bigl(y-G(x)\bigr) - v'(x)e^{v(x)}\sinh \sqrt{-2E}\bigl( y-G(x)\bigr)}{\sqrt{-2E-e^{2v(x)}}}, v'(x), \right. \\
		&\left. \tfrac{-\sqrt{-2E}e^{-v(x)} \sinh\sqrt{-2E}\bigl(y-G(x)\bigr) + v'(x)e^{v(x)}\cosh \sqrt{-2E}\bigl( y-G(x)\bigr)}{\sqrt{-2E-e^{2v(x)}}}, 2\sqrt{-E}(y-x)\right), \quad E < 0,
	\end{split}
	\]
	is an isometric minimal immersion with associated Hopf differential $\Theta = -i \df z\otimes \df z$, where $G(x) = \int_0^x \frac{\df t}{2E + e^{2v(t)}}$  and $g_0$ stands for the Euclidean metric in $\R^2$.
}

\begin{figure}[htbp]
\begin{tabular}{ccc}
\includegraphics[width=0.33\textwidth]{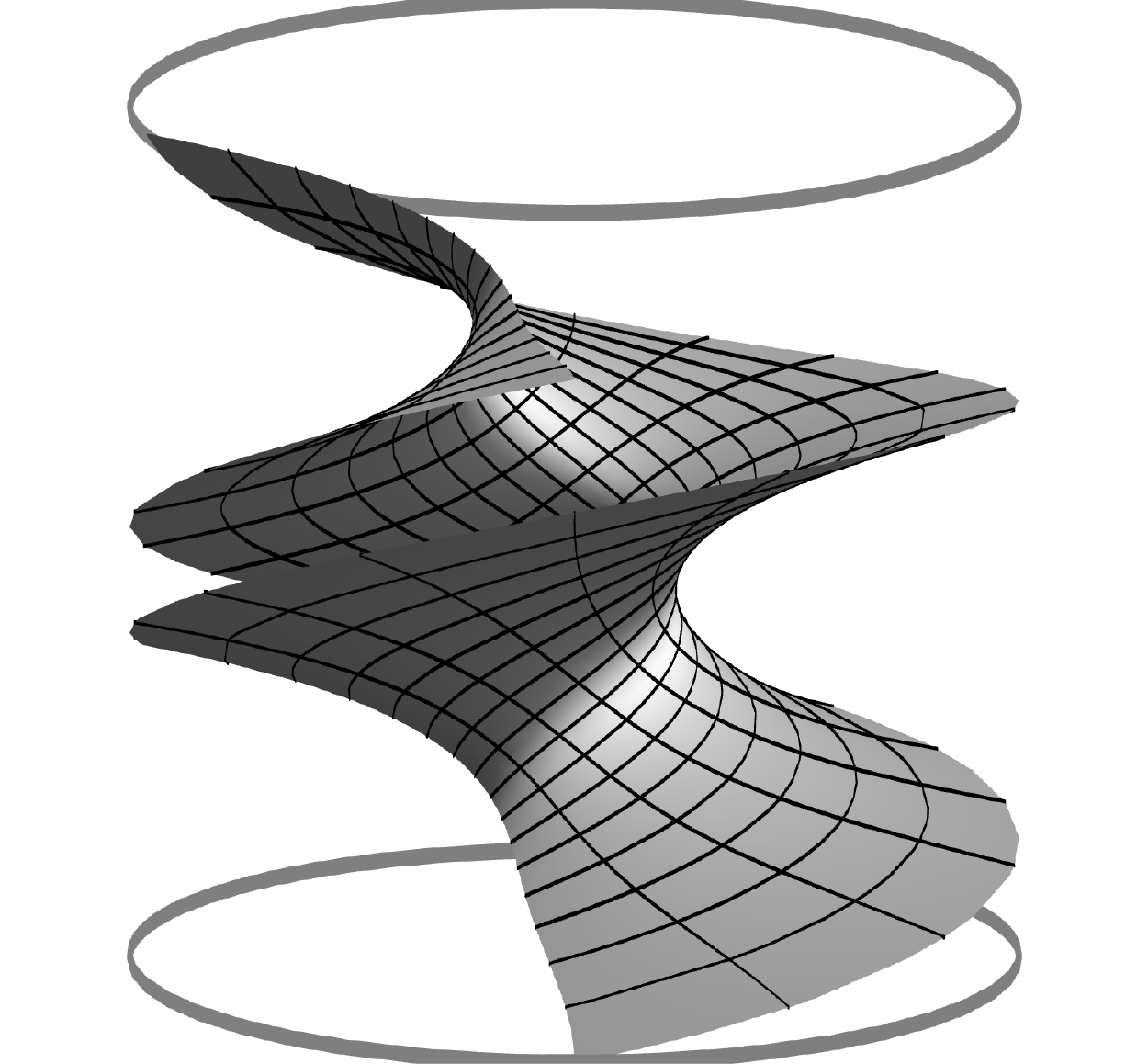}	&  \includegraphics[width=0.33\textwidth]{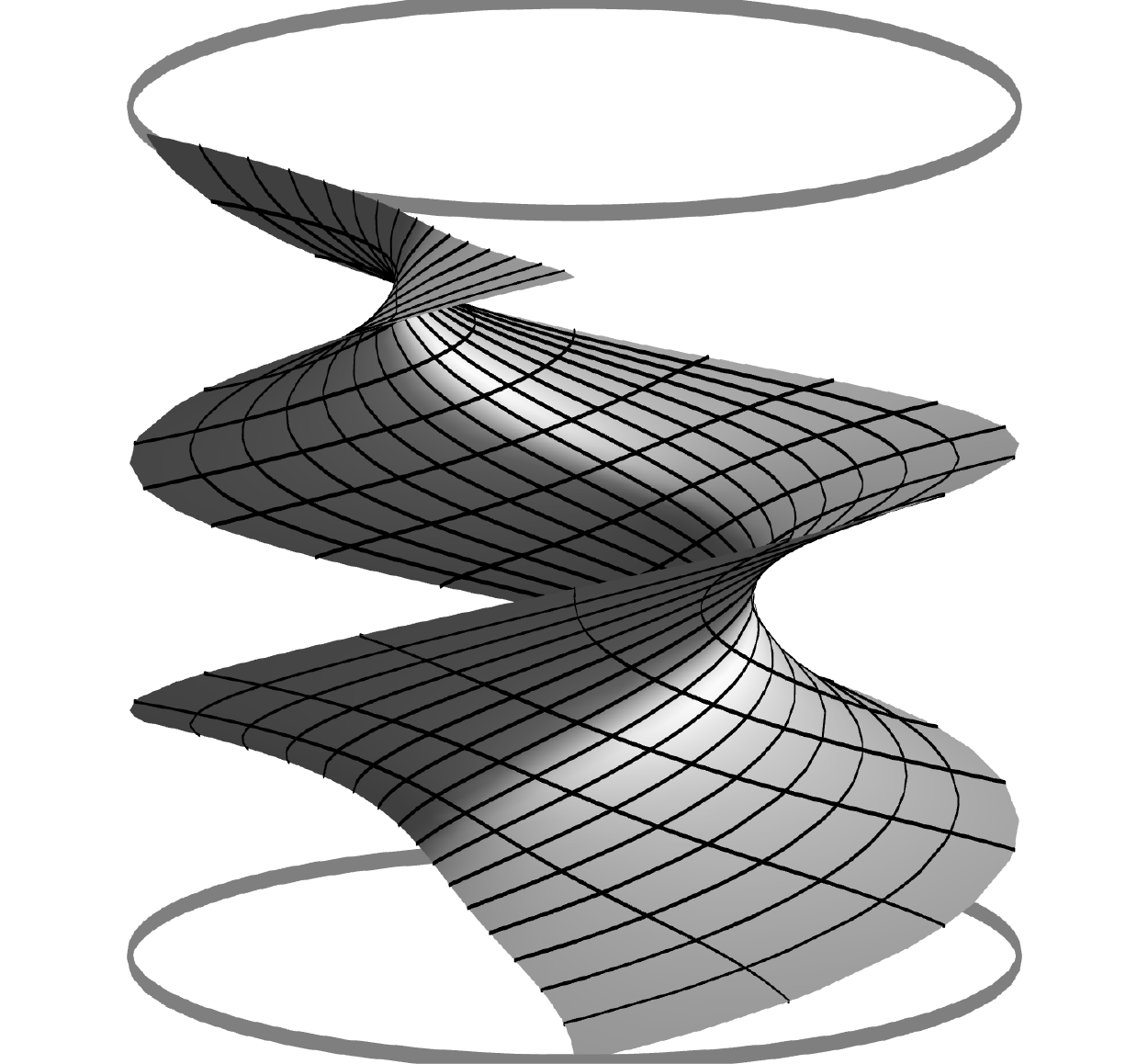} & \includegraphics[width=0.33\textwidth]{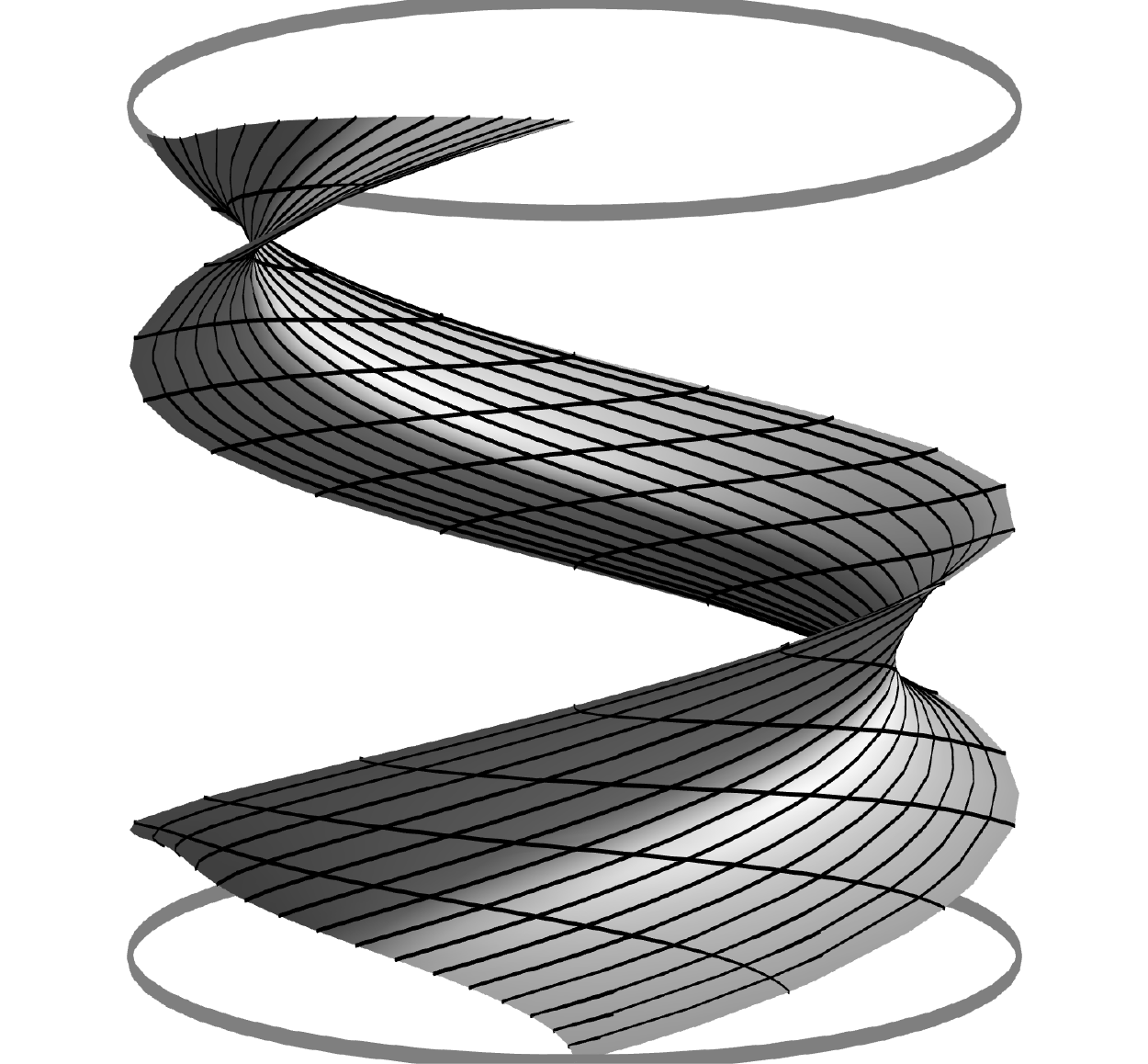} \\
\includegraphics[width=0.33\textwidth]{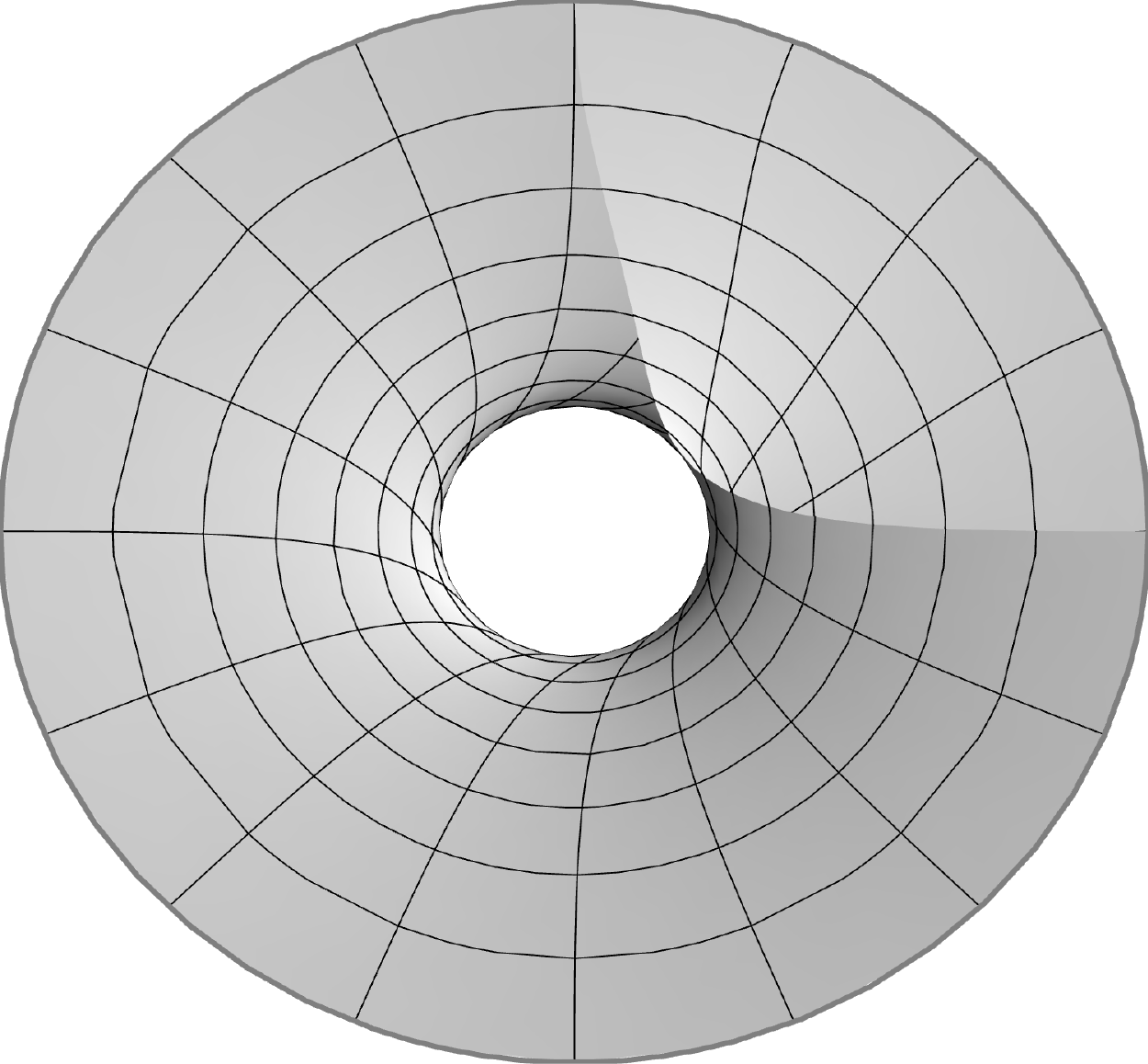}	&  \includegraphics[width=0.33\textwidth]{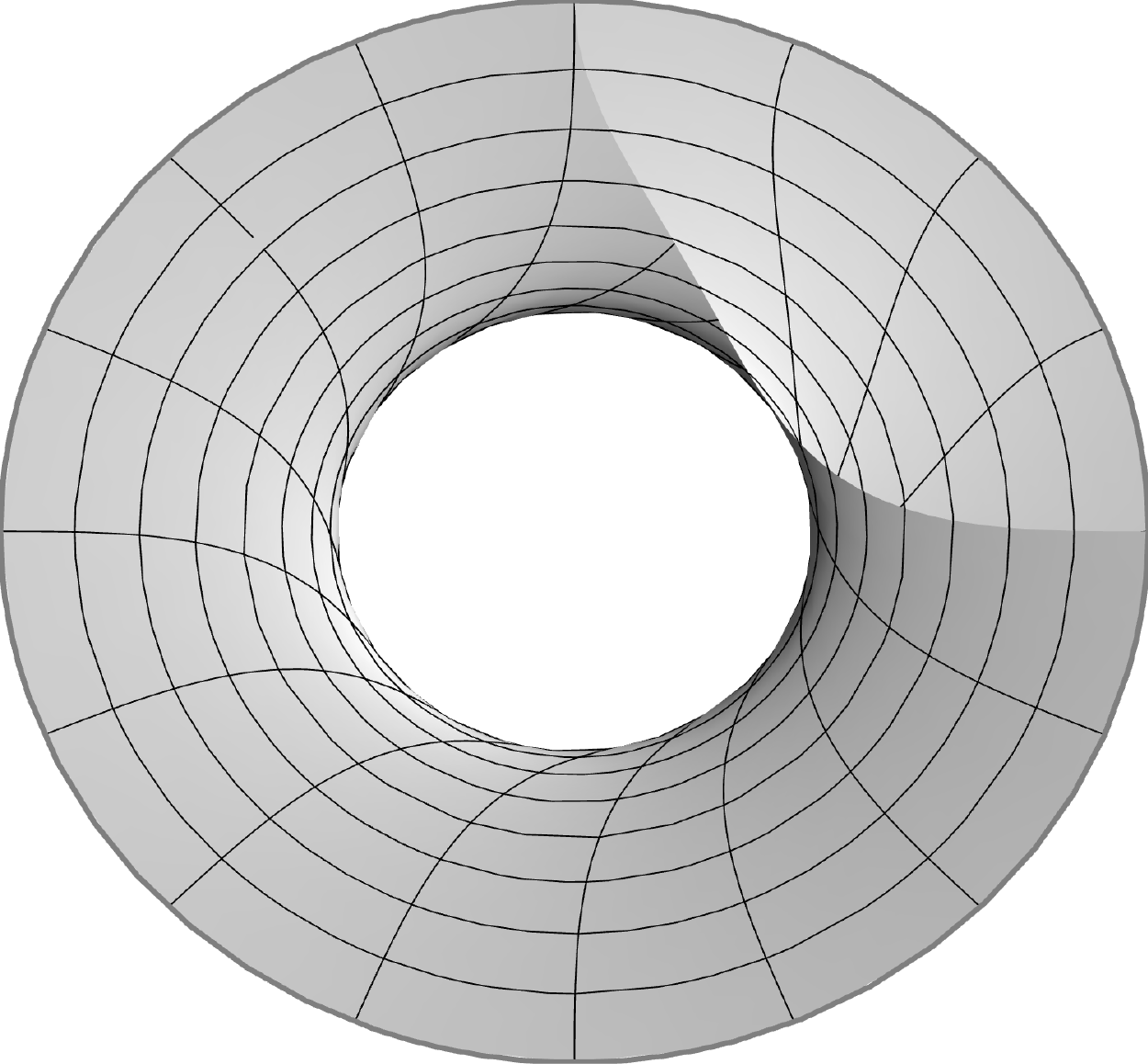} & \includegraphics[width=0.33\textwidth]{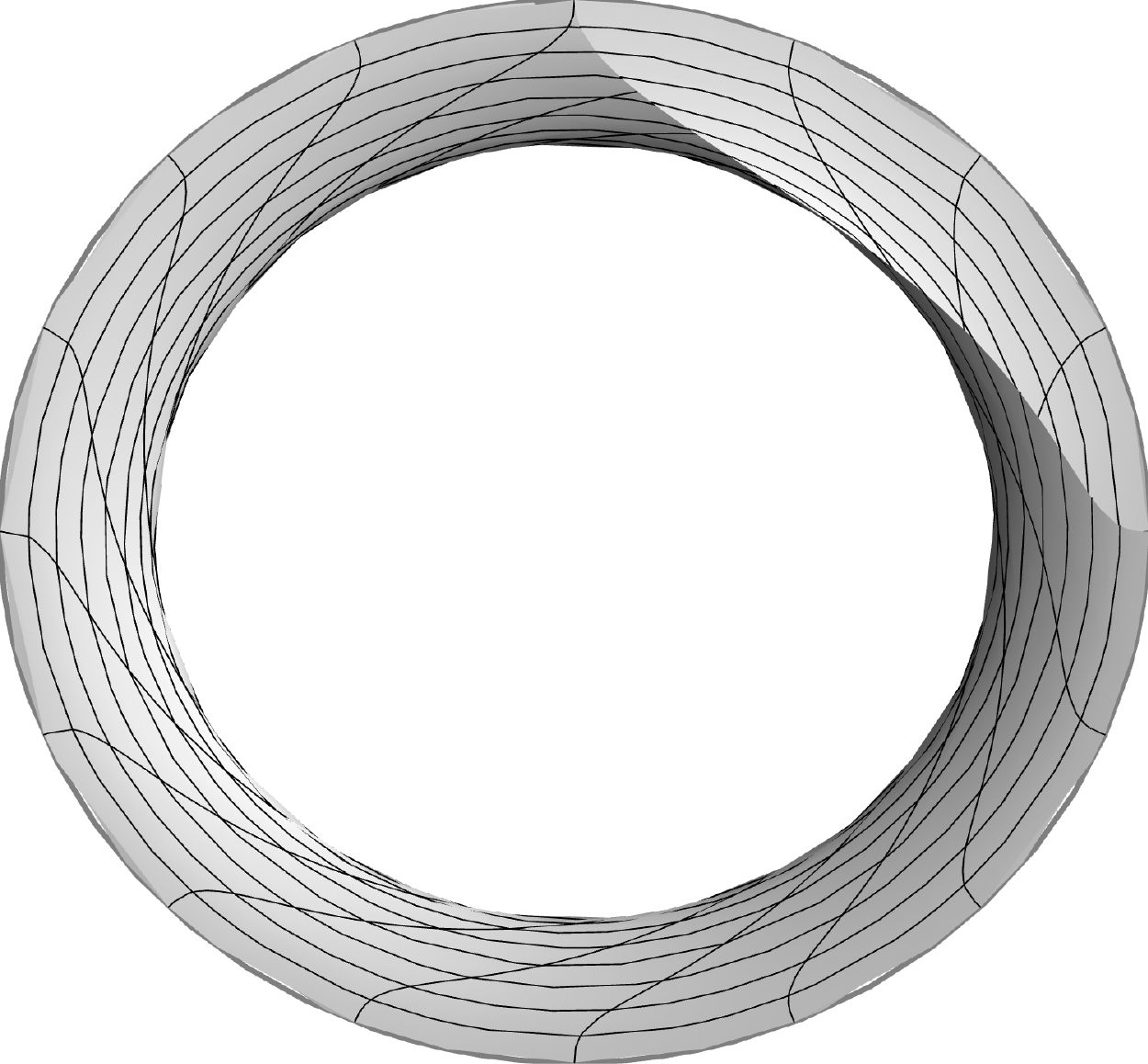}
\end{tabular}
\caption{From left to right, typical solutions for positive energy $E = 4, 1, 0.1$ in $\h^2\times \R$ being $\h^2$ the disc model. Below each surface the top view has been drawn. The boundary of $\h^2$ is drawn to help the visualization.} \label{fig:positive-energy}
\end{figure}

\begin{figure}[htbp]
\begin{tabular}{ccc}
\reflectbox{\includegraphics[width=0.33\textwidth]{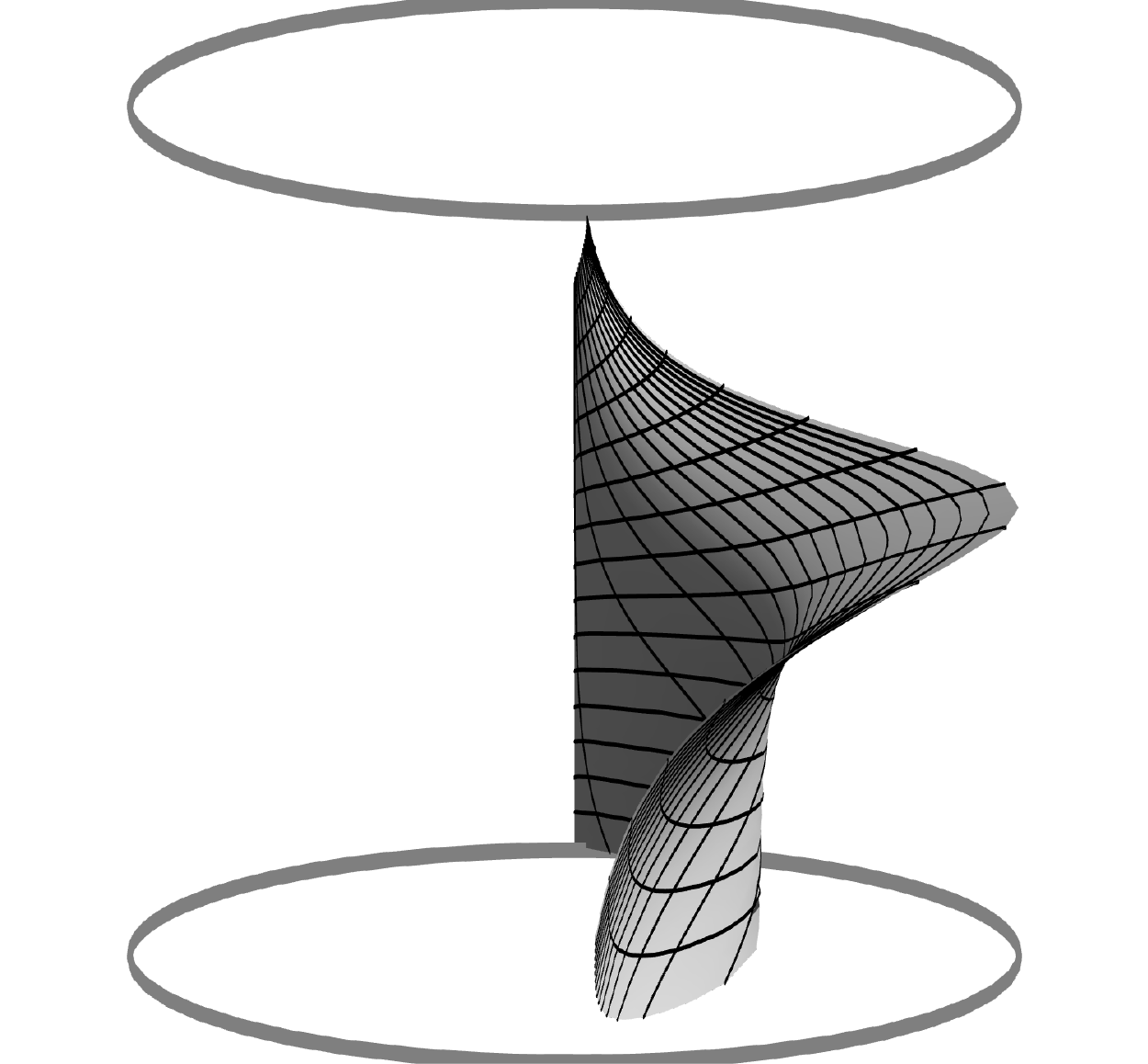}}	&  \includegraphics[width=0.33\textwidth]{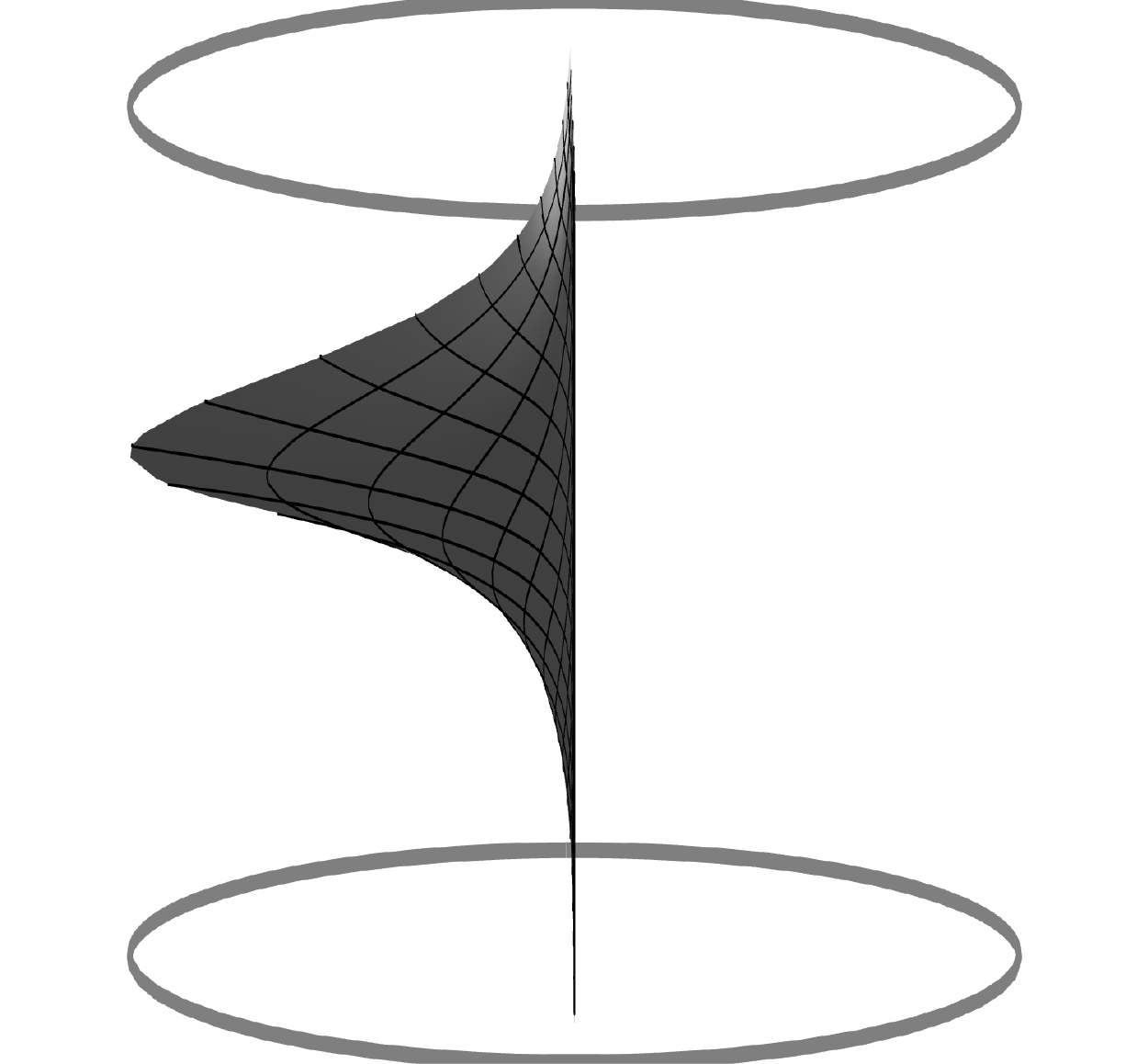} & \includegraphics[width=0.33\textwidth]{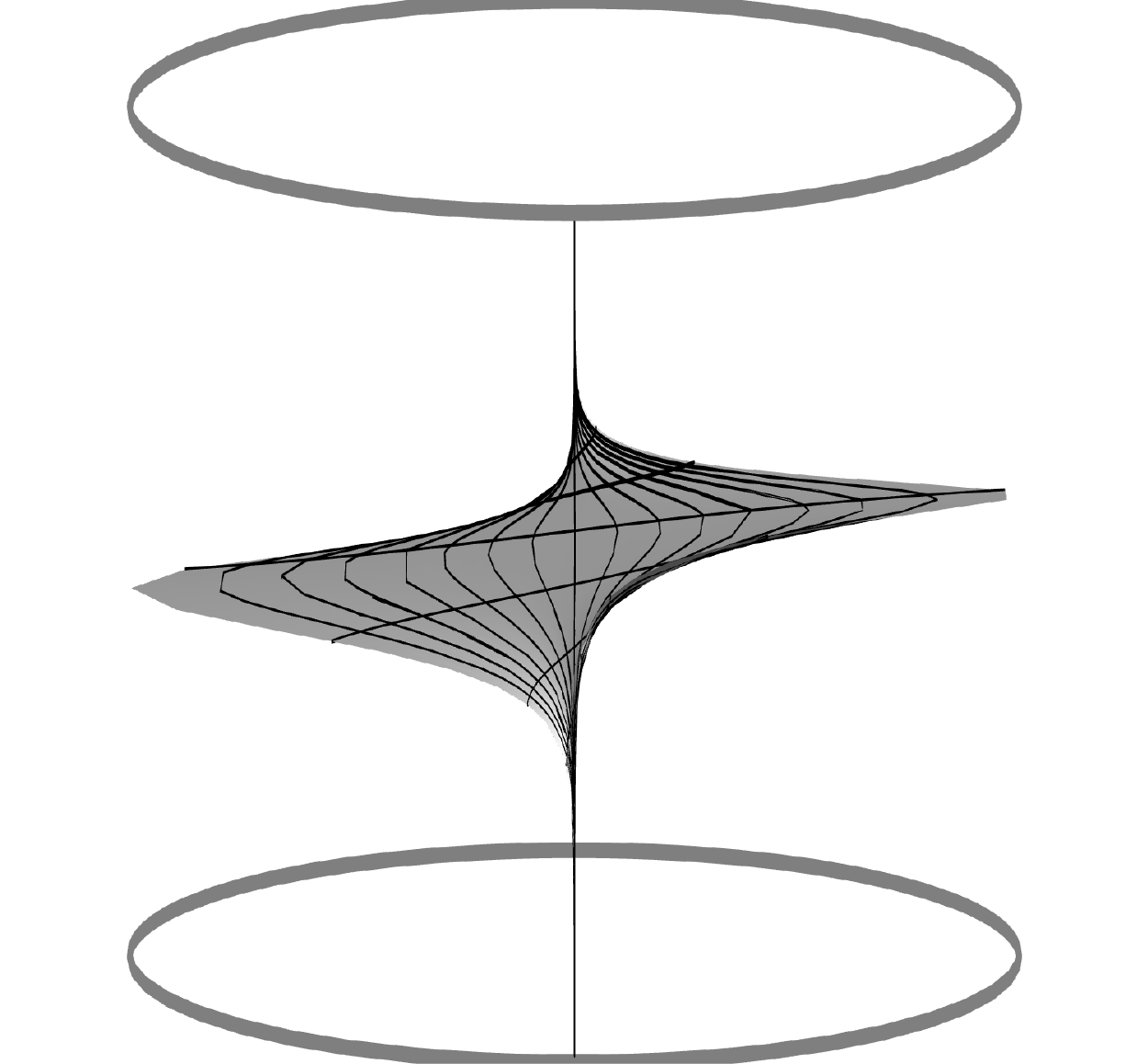} \\
\includegraphics[width=0.33\textwidth]{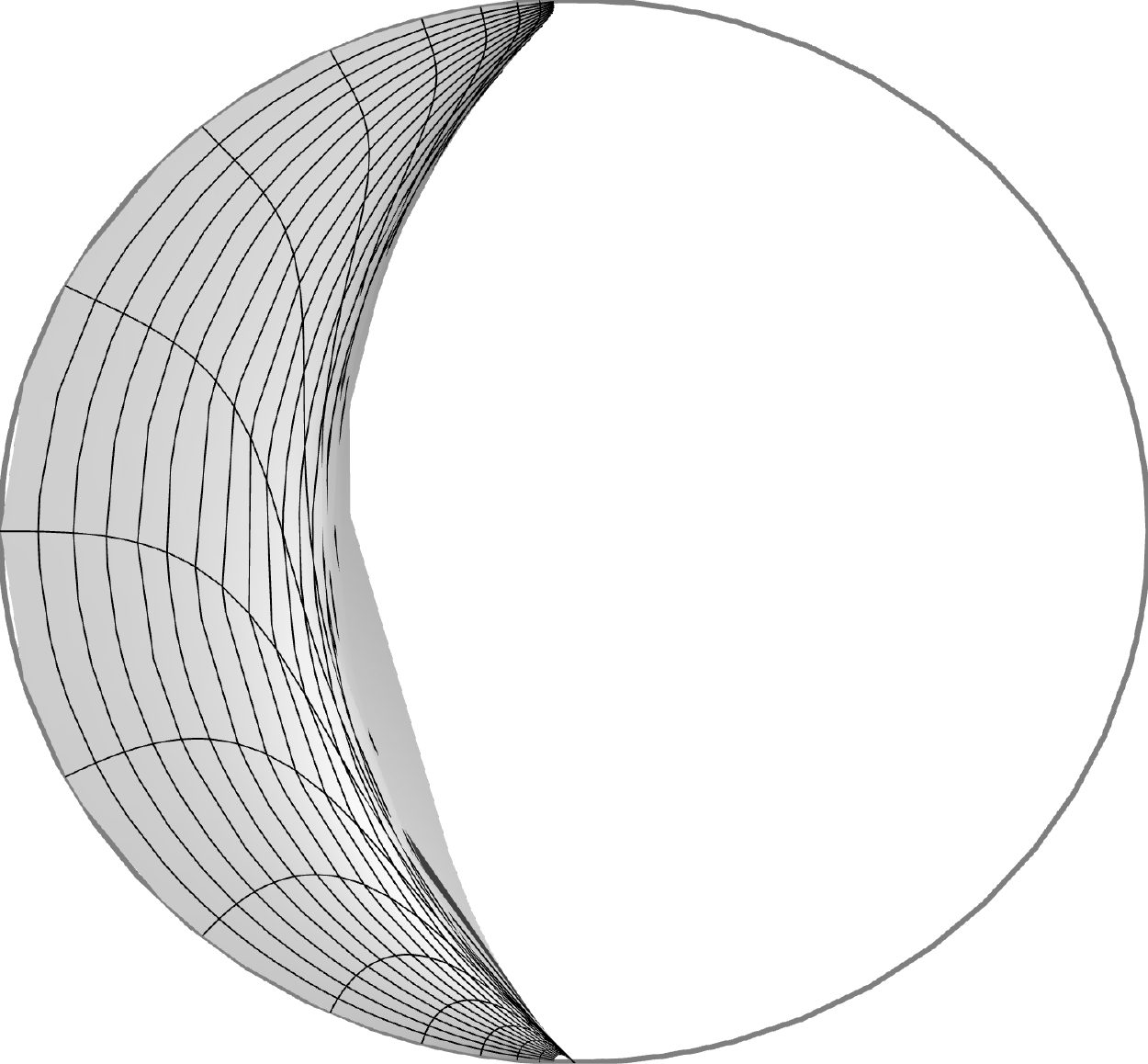}	&  \includegraphics[width=0.33\textwidth]{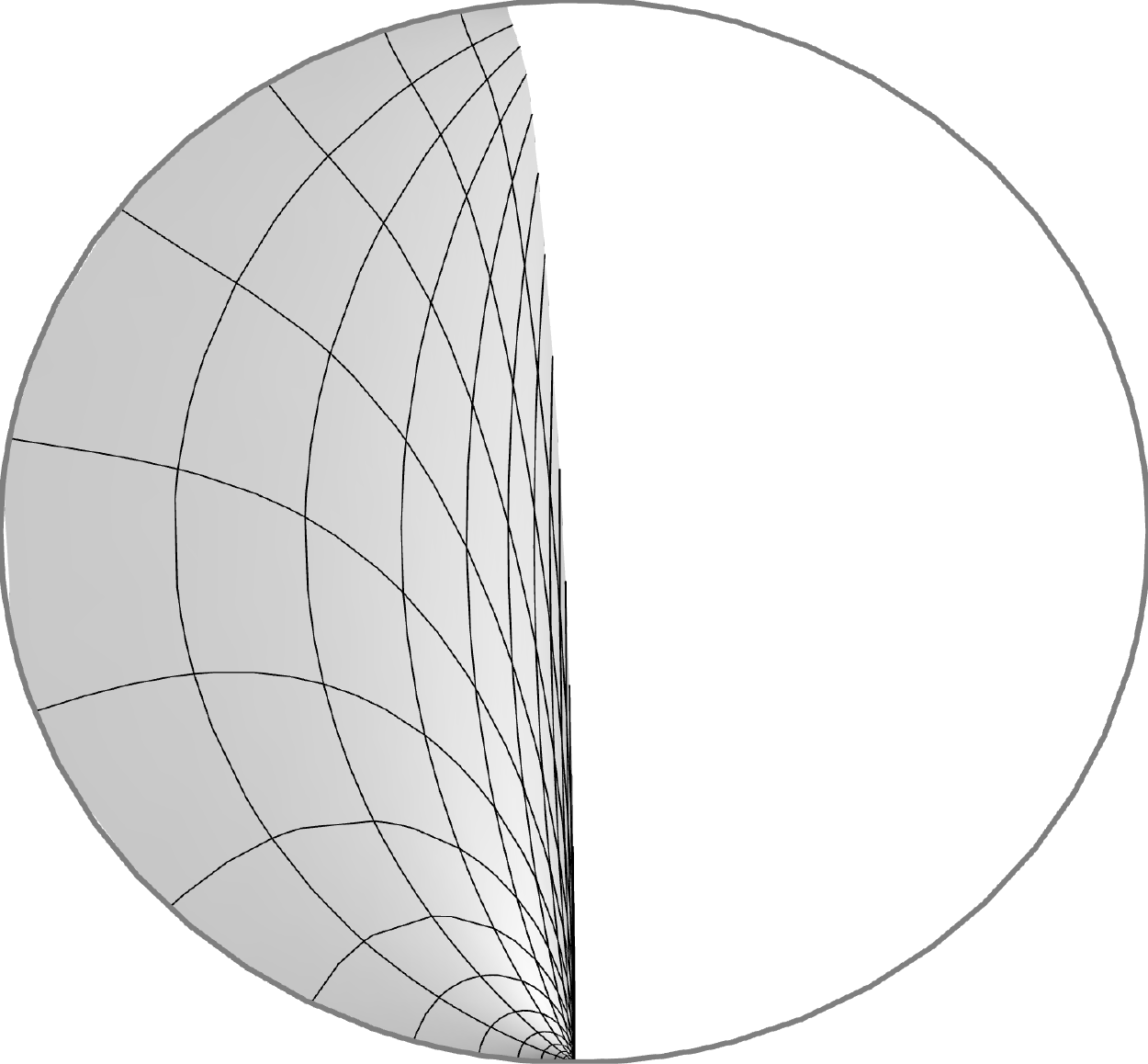} & \includegraphics[width=0.33\textwidth]{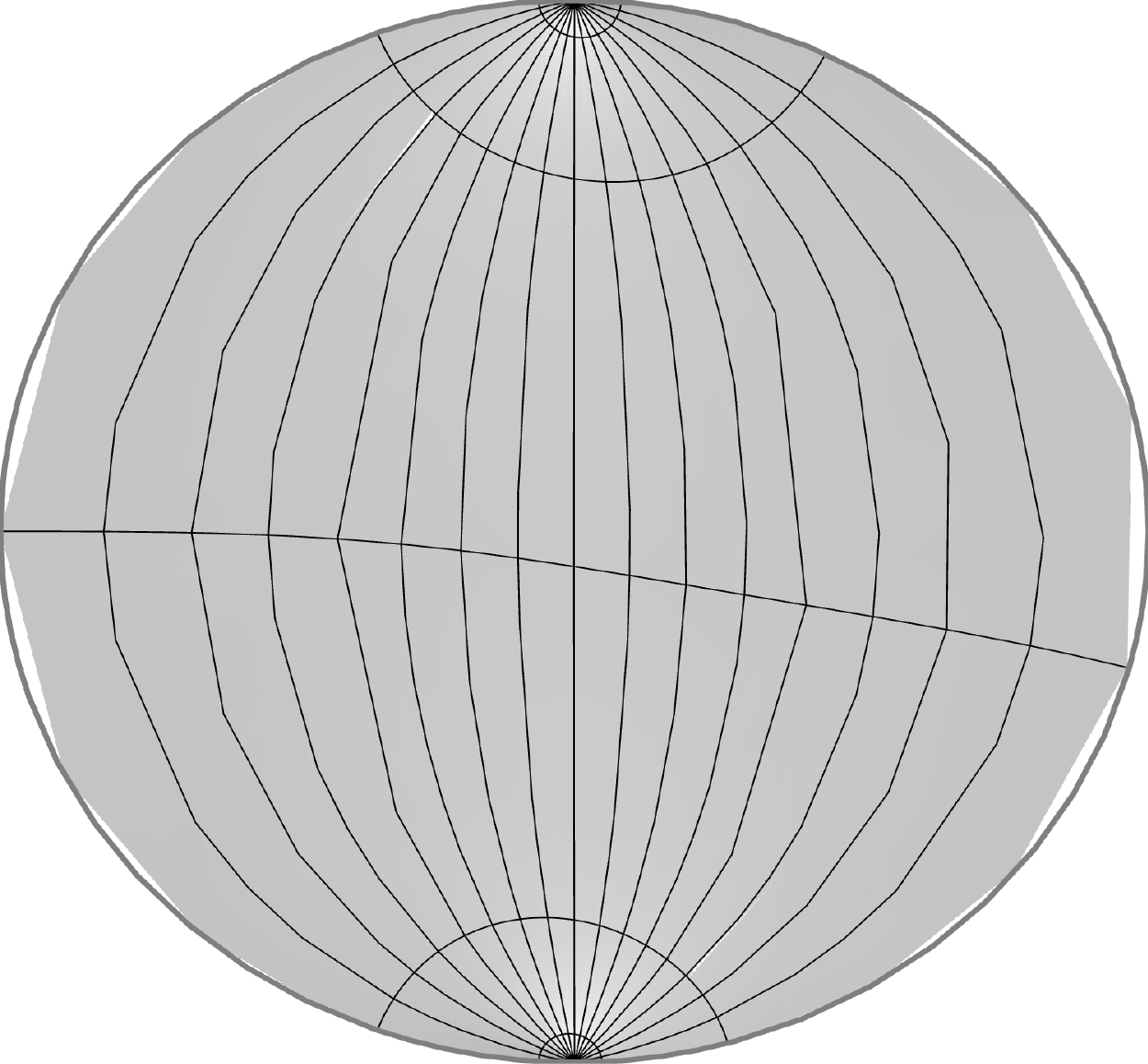}
\end{tabular}
\caption{From left to right, typical solutions for negative energy $E = -0.5, -1, -6$ in $\h^2\times \R$ being $\h^2$ the disc model. Below each surface the top view has been drawn.} \label{fig:negative-energy}
\end{figure}

This can be checked directly from the definition or, taking into account Corollary~\ref{cor:Gauss-map-H2xR}, computing the first component of the Gauss map of the maximal immersion $\phi_E$ given in Proposition~\ref{prop:examples-AdS}.

These examples are invariant by the $1$-parameter group of isometries $A_\theta\times \tau_\theta$ of $\h^2\times \R$, where $A_\theta$ is an isometry of $\h^2$ given by:

\begin{center}
\begin{tabular}{ccc}
	&	$E > 0$ &  $E < 0$ \\ 
	$A_\theta$ & $\begin{pmatrix}
		1 	& 	0 	& 	0 	\\ 	
		0 	&	\cos \theta 	&	-\sin \theta 	\\
		0 	&	\sin \theta 	&	\cos \theta
		\end{pmatrix}$
	& 
	$\begin{pmatrix}
			\cosh\theta 	&	 0 	& 	\sinh\theta	\\
			0 	&	1 	&	0	\\
			\sinh \theta 	&	0 	& 	\cosh \theta
		\end{pmatrix}$
\end{tabular}
\end{center}
and $\tau_\theta:\R \to \R$ is given by $\tau_\theta(t) = t + \tfrac{\theta}{\sqrt{\abs{E}}}$.

The complete classification of constant mean curvature surfaces (in particular the minimal ones) invariant by a $1$-parameter group of $\h^2\times \R$ can be found in~\cite{Onnis2008} and the references therein.

\section{Appendix}\label{sec:appendix}

In this section we will exhibit explicit solutions for the equation $\Delta v - 2\sinh(2v) = 0$. We will restring ourselves to the simplest case, that is, when the function only depends on one variable, i.e.\ $v = v(x)$. In that case it is easy to find a first integral of the equation, namely the energy $E = (v')^2/2 - \cosh(2v)$ is constant for every solution $v$ (cf.~\eqref{eq:sinh-Gordon-ordinary}). Moreover, if $v$ is a solution then $u(x) = -v(x)$ and $w(x) = v(-x)$ are also solutions with the same energy of $v$. Hence, we only need to consider initial conditions $v(0) = v_0 \geq 0$ and $v'(0) = \sqrt{2(E + \cosh(2v_0))} \geq 0$ (note that $E + \cosh (2v(x)) \geq 0$ by the definition of $E$). Thus we are interested in solving the following initial value problem:
\begin{equation}\label{eq:edo-problem}
\begin{split}
v''(x) -2 \sinh(2v(x)) &= 0, \\
v(0) = v_0 \geq 0, \quad  v'(0) &= \sqrt{2(E + \cosh(2v_0))}.
\end{split}
\end{equation}

It is possible to obtain all the solutions to that problem in terms of Elliptic Jacobi functions (see for instance~\cite{BF1971} for further details). Let
\[
F(\varphi, \mu) = \int_0^\varphi \tfrac{\df \theta}{\sqrt{1-\mu\sin^2 \theta}}, \quad 0 \leq \mu \leq 1,
\]
be the \emph{elliptic integral of the first kind with modulus $\mu$}. Then denoting the inverse of $\varphi \mapsto F(\varphi, \mu)$ by $\varphi = \am_\mu(x)$, the elementary Jacobi elliptic functions are given by:
\begin{align*}
\sn_\mu(x) &= \sin \am_\mu(x), &  \dn_\mu(x) &= \sqrt{1 - \mu\sin^2 \am_\mu(x)} \\
\cn_\mu(x) &= \cos \am_\mu(x), & \tn_\mu(x) &= \sn_\mu(x)/\cn_\mu(x)
\end{align*}
The basic properties of these functions are:
\begin{gather*}
	\sn_\mu(x)^2 + \cn_\mu(x)^2 = 1, \quad \mu \sn_\mu(x)^2 + \dn_\mu(x)^2 = 1 \\
	\sn_\mu\bigl(x + 2K(\mu)\bigr) = -\sn_\mu(x), \quad \cn_\mu\bigl(x + 2K(\mu) \bigr) = -\cn_\mu(x), \\
	\dn_\mu(x+2K(\mu)) = \dn_\mu(x), \quad \tn_\mu\bigl( x + 2K(\mu) \bigr) = \tn_\mu(x),
\end{gather*}
where $K(\mu) = F(\tfrac{\pi}{2},\mu)$ is the complete elliptic integral of the first kind. Moreover, the derivaties of the Jacobi elliptic functions are:
\begin{align*}
	\sn_\mu'(x) &= \cn_\mu(x)\dn_\mu(x), & \cn_\mu'(x) &= -\sn_\mu(x) \dn_\mu(x), \\
	\am_m'(x) &= \dn_\mu(x), & \dn_\mu'(x) &= -\mu \sn_\mu(x) \cn_\mu(x).
\end{align*}

\begin{lemma}\label{lm:solutions-sinh-Gordon-ordinary}
The solution $v:I \rightarrow \R$ of the initial value problem~\eqref{eq:edo-problem} and its maximal definition interval $I$ are given, in terms of the energy $E$, by:
\begin{align*}
	[E > 1]\quad v(x) &= \log\bigl(\lambda \tn_\mu(\lambda^{-1}x + a_0))\bigr), & \mu &= 1-\lambda^4,\\
	I &= ]-a_0, \lambda K(\mu)-a_0[, & a_0 &= \arctn_\mu(\lambda^{-1}e^{v_0}),\\
	[\abs{E}\leq 1]\quad  v(x) &= \log \bigl( \tn_\mu(x+a_0)\dn_\mu(x+a_0) \bigr),& \mu &= \tfrac{1-E}{2},\\
	I &= ]-a_0, K(\mu)-a_0[, & a_0 &= \tfrac{1}{2}\arccn_\mu(\tanh(v_0)),\\
	[E < -1]\quad v(x) &= -\log\bigl( \lambda^{-1} \sn_\mu(\lambda x + a_0) \bigr), & \mu &= \lambda^{-4},\\
	I &= ]-a_0, 2\lambda^{-1} K(\mu) - a_0[, & a_0 &= \arcsn_\mu(\lambda e^{-v_0})
\end{align*}
where $\lambda^2 = |E - \sqrt{E^2 - 1}|$ for $\abs{E} > 1$.
\end{lemma}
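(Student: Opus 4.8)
The plan is to integrate the equation through its first integral and reduce the resulting quadrature to a standard elliptic integral, then invert. Since $E=\tfrac12(v')^2-\cosh(2v)$ is constant along any solution, the initial condition in~\eqref{eq:edo-problem} gives $v'=\sqrt{2(E+\cosh 2v)}$ near $x=0$, so the problem separates as $\df x=\df v/\sqrt{2(E+\cosh 2v)}$. The decisive step is the substitution $s=e^{v}$, under which $2(E+\cosh 2v)=2E+s^{2}+s^{-2}=(s^{4}+2Es^{2}+1)/s^{2}$ and $\df v=\df s/s$, turning the quadrature into
\[
x=\int \frac{\df s}{\sqrt{s^{4}+2Es^{2}+1}}+\text{const}.
\]
Thus everything is governed by the quartic $P(s)=s^{4}+2Es^{2}+1$, whose $s^{2}$-roots are $-E\pm\sqrt{E^{2}-1}$.

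Next I would split into the three cases according to the nature of these roots, which is exactly the trichotomy $E>1$, $\abs{E}\le1$, $E<-1$. For $\abs{E}>1$ I introduce $\lambda$ by $\lambda^{2}=\abs{E-\sqrt{E^{2}-1}}$ (so that $\lambda^{2}+\lambda^{-2}=2\abs{E}$), giving $P(s)=(s^{2}+\lambda^{2})(s^{2}+\lambda^{-2})$ when $E>1$ (no real root, $P>0$) and $P(s)=(s^{2}-\lambda^{2})(s^{2}-\lambda^{-2})$ when $E<-1$ (four real roots); for $\abs{E}\le1$ the roots form a complex-conjugate pair and $P$ is positive definite. In each configuration the integral above is a tabulated reduction to Legendre form (see~\cite{BF1971}), whose inverse is precisely the claimed Jacobi function: $\tn_\mu$ with $\mu=1-\lambda^{4}$ for $E>1$, the product $\tn_\mu\dn_\mu$ with $\mu=\tfrac{1-E}{2}$ for $\abs{E}\le1$, and $\sn_\mu$ with $\mu=\lambda^{-4}$ for $E<-1$.

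Rather than track the normalizations of the integral tables, the cleanest rigorous route is to verify the candidates directly and invoke uniqueness for~\eqref{eq:edo-problem}. For instance in the case $E>1$, writing $\xi=\lambda^{-1}x+a_0$ and using $\tn_\mu'=\dn_\mu/\cn_\mu^{2}$ one gets $v'=\lambda^{-1}\dn_\mu(\xi)/(\sn_\mu(\xi)\cn_\mu(\xi))$; then, substituting $\dn_\mu^{2}=1-(1-\lambda^{4})\sn_\mu^{2}$ and $\cn_\mu^{2}=1-\sn_\mu^{2}$ and clearing denominators, the combination $\tfrac12(v')^{2}-\cosh(2v)$ collapses to the constant $\tfrac12(\lambda^{2}+\lambda^{-2})=E$. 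Hence $E$ is a first integral with the prescribed value; differentiating it recovers $v''=2\sinh(2v)$ wherever $v'\neq0$ (hence everywhere, the zeros of $v'$ being isolated), and the same identities $\sn_\mu^{2}+\cn_\mu^{2}=1$, $\mu\sn_\mu^{2}+\dn_\mu^{2}=1$ handle the other two cases. The value of $a_0$ is then forced by $v(0)=v_0$: e.g.\ $\lambda\tn_\mu(a_0)=e^{v_0}$ gives $a_0=\arctn_\mu(\lambda^{-1}e^{v_0})$, while in the range $\abs{E}\le1$ the relation $e^{v_0}=\tn_\mu(a_0)\dn_\mu(a_0)$ must be rewritten through the $\cn$-duplication formula to obtain $a_0=\tfrac12\arccn_\mu(\tanh v_0)$. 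Finally the maximal interval $I$ is read off as the component containing $x=0$ on which $e^{v}$ stays in $(0,\infty)$, i.e.\ the gap between the nearest zero and pole of the relevant elliptic factor ($\sn_\mu$ or $\cn_\mu$), yielding the stated endpoints in terms of $K(\mu)$.

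The main obstacle is bookkeeping rather than conceptual: one must keep the three root configurations, the two constants $\lambda,\mu$, and the signs of the square roots consistent across cases, and the reduction of the first-integral identity in the mixed $\tn_\mu\dn_\mu$ case ($\abs{E}\le1$), together with the inversion of the initial value via the duplication formula, are the most delicate algebraic points. The symmetries noted before~\eqref{eq:edo-problem}, namely that $-v(x)$ and $v(-x)$ are again solutions of the same energy, are what allow restricting to $v_0\ge0$, $v'(0)\ge0$ and thereby fixing the sign of the square root throughout $I$.
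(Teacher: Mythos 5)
Your proposal is correct and takes essentially the same route as the paper: the paper's entire proof is the one-line remark that the lemma follows by ``a direct computation taking into account the aforementioned properties of the Jacobi elliptic functions,'' which is exactly the verification step at the core of your argument (checking that each candidate satisfies the first-integral identity $\tfrac12(v')^2-\cosh(2v)=E$ via $\sn_\mu^2+\cn_\mu^2=1$ and $\mu\sn_\mu^2+\dn_\mu^2=1$, then fixing $a_0$ and reading off the blow-up interval). Your additional scaffolding --- the quadrature reduction via $s=e^{v}$ to the quartic $s^4+2Es^2+1$, which motivates the case trichotomy, and the explicit appeal to uniqueness for the initial value problem --- is left implicit in the paper but is precisely what makes the ``direct computation'' a complete proof.
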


\begin{proof}
It is a direct computation taking into account the aforementioned properties of the Jacobi elliptic functions.
\end{proof}

\begin{remark}\label{rmk:properties-solutions-sinh-Gordon}

In the special cases $E = 1$ and $E = -1$ we get solutions in terms of elementary functions, namely, $v_1(x) = \log \tan(x)$, $v_{-1}(x) = \log \cotanh(x)$ as well as the constant solution $v(x) = 0$ (also with $E = -1$). 

On the one hand, the solutions of the sinh-Gordon equation with energy $E > -1$ are symmetric with respect to the middle point of the maximal interval of definition. On the other hand, the solutions $v$ with energy $E < -1$ never vanish and are symmetric with respect to the vertical line passing through the middle point of the maximal interval of definition.
\end{remark}

\end{document}